

\documentclass[12pt]{amsart}
\usepackage{geometry}                
\geometry{letterpaper}                   
\usepackage{graphicx}
\usepackage{amssymb}
\usepackage{epstopdf}
\usepackage{amsmath}
\usepackage{color}
\usepackage{hyperref}
\usepackage{pdfsync}
 \usepackage{tikz}
 \usepackage{tikz-cd}

\usepackage[all]{xy}
\xyoption{matrix}
\xyoption{arrow}

\usetikzlibrary{arrows,decorations.pathmorphing,backgrounds,positioning,fit,petri}

 \tikzset{help lines/.style={step=#1cm,very thin, color=gray},
help lines/.default=.5} 
\tikzset{thick grid/.style={step=#1cm,thick, color=gray},
thick grid/.default=1} 

\textwidth = 6.2 in 
\textheight = 8.6 in 
\oddsidemargin = .1 in 
\evensidemargin = .1 in 
\topmargin = 0.1 in
\headheight = 0.0 in
\headsep = 0.2 in
\parskip = 0.0in
\parindent = 0.2in

 
\newtheorem{thm}{Theorem}[section]
\newtheorem{lem}[thm]{Lemma}
\newtheorem{cor}[thm]{Corollary}
\newtheorem{prop}[thm]{Proposition}

\newenvironment{customthm}[1]
  {\innercustomthm}{\endinnercustomthm}


\theoremstyle{definition}
\newtheorem{defn}[thm]{Definition}

\theoremstyle{remark}

\numberwithin{equation}{section}

\DeclareGraphicsRule{.tif}{png}{.png}{`convert #1 `dirname #1`/`basename #1 .tif`.png}

 



\newcommand{\into}{\hookrightarrow}
 \newcommand{\onto}{\twoheadrightarrow}



%
%
%
%
%
%
%
%
%

 \DeclareMathOperator{\Diff}{Diff}

\newcommand{\field}[1]{\mathbb{#1}}
\newcommand{\ZZ}{\ensuremath{{\field{Z}}}}

\newcommand{\RR}{\ensuremath{{\field{R}}}}

\newcommand{\commentout}[1]{}

\newcommand{\cC}{\ensuremath{{\mathcal{C}}}}
\newcommand{\cD}{\ensuremath{{\mathcal{D}}}}

\newcommand{\cP}{\ensuremath{{\mathcal{P}}}}

\title{Second obstruction to pseudoisotopy in dimension 3}
\author{Kiyoshi Igusa}
\address{Department of Mathematics, Brandeis University, Waltham, MA 02454}\email{igusa@brandeis.edu}
\thanks{Supported by the Simons Foundation}

\subjclass[2020]{
19J10: 58K60}

 
\begin{document}

\begin{abstract}
We use lens-shaped models and the second obstruction to pseudoisotopy to construct a nontrivial diffeomorphism of $M\times I$ where $M$ is the connected sum of $S^1\times S^2$ with another nonsimply connected 3-manifold $M'$. Then we take two copies of this diffeomorphism and paste together their tops and bottoms to obtain a diffeomorphism of $M\times S^1$. Properties of the second obstruction and the first Postnikov invariant imply that this diffeomorphism of the closed 4-manifold $M\times S^1$ is not isotopic to the identity. Similar results were obtain by Singh \cite{S}.
\end{abstract}

\maketitle



\section*{Introduction}

There has been a resurgence of interest in diffeomorphisms of 4-manifolds, for example the work of Watanabe \cite{W}, \cite{W1} and Budney-Gabai \cite{BG}. Also, Singh \cite{S} has constructed diffeomorphisms of 4-manifolds using the methods of Hatcher and Wagoner \cite{HW}, \cite{H2}.

In this paper we also use Hatcher-Wagoner pseudoisotopy theory to construct pseudoisotopies of a family of 3-manifolds $M$ and use these to construct nontrivial diffeomorphisms of the closed $4$-manifolds $M\times S^1$. Recall that a pseudoisotopy of $M$ is a diffeomorphism of $M\times I$ which is the identity on $M\times 0$. The space of pseudoisotopies of $M$ is denoted $\cC(M)$. Our first theorem is:

\begin{customthm}{A}[Theorem \ref{theorem A}]\label{thm A}
Let $M'$ be any nonsimply connected $3$-manifold and let $M=(S^1\times S^2)\# M'$. Then there is a pseudoisotopy of $M$ every power of which is nontrivial.
\end{customthm}

To prove this we use ``lens-shaped models'' to construct diffeomorphisms of $M\times I$ which are the identity on the bottom ($M\times 0$) and show using our formulas from \cite{What happens} that these diffeomorphisms are nontrivial. More precisely, we compute the ``second obstruction'' which is an element of the group $Wh_1^+(\pi_1M;\ZZ_2\oplus \pi_2M)$. In the first example of this (when $M'=S^1\times S^2$) the first Postnikov invariant of $M$ is trivial (since $\pi_1M$ is a free group which has no cohomology in degrees $\ge2$). So this preliminary calculation is enough to prove Theorem \ref{thm A} in this case.

In the general case, when $M'$ is an arbitrary nontrivial 3-manifold, we need a calculation of the mapping
\[
	\chi: K_3(\ZZ[\pi_1M])\to Wh_1^+(\pi_1M;\ZZ_2\oplus\pi_2M)
\]
induced by the first Postnikov invariant $k_1M$ (ignoring the $\ZZ_2$ component $Wh_1^+(\pi_1M;\ZZ_2)$.

We use the fact that the sphere $S^2$ in $S^1\times S^2$ is a retract of $M$ and we use the retraction $r:M\to S^2$ to get a surjective mapping
\[
	r_\ast: Wh_1^+(\pi_1M;\pi_2M)\to Wh_1^+(\pi_1M;\ZZ).
\]
We show that the composition $r_\ast\circ \chi=0$. Therefore, when $r_\ast$ is applied to the second obstruction of our example, the result is nonzero. So, our pseudoisotopy is stably nontrivial.

At this point we need to discuss the stablization process. We need a definition of the second obstruction invariant which commutes with stabilization:
\[
	\cC(M)\to \cP(M)=colim\, \cC(M\times I^n)
\]
given by iterating the suspension map $\sigma_\pm:\cC(M)\to \cC(M\times I)$ (Figures \ref{Fig: positive suspension}, \ref{Fig: negative suspension}). However, positive suspension and negative suspension are negative to each other. So, we need our second obstruction to commute with $\sigma_+$ and anti-commute with $\sigma_-$. To this end we need to introduce a sign, namely $(-1)^k$ where $k$ is the lower index of the two indices used in the lens-shaped model. Then we have a well-defined stable invariant given as follows.

First, we need the definition of the stable first invariant. This is given by stabilizing, then taking the first obstruction:
\[
	\pi_0\cC(M)\to \pi_0\cP(M)\to Wh_2(\pi_1M).
\]
Then we define the stable second invariant on the kernel of the stable first invariant.

The reason that we need to make this fuss is because of the involution on lens-shaped models. Since $3$ is an odd number, the space of lens-shaped models cannot be chosen to be invariant under the involution. In fact the involution sends $\cD^-(M)$ to $\cD^+(M)$ where $\cD^-(M)$ is the space of lens-shaped models in the indices $1,2$ and $\cD^+(M)$ is the same in indices $2,3$. 

We have trouble computing the second obstruction on the sum $g\cup \varepsilon g$ since the two summands are elements of different groups $\pi_0\cD_0^-(M)$ and $\pi_0\cD_0^+(M)$. (We use marked lens-shaped models so that they form a group.) However, the stable second obstruction is additive (it makes sense to add their values on the two pieces even though they lie in different groups). We show that this is well defined and give a ``stable retraction invariant'' in $Wh_1^+(\pi_1M;\ZZ)$. We compute this on $g\cup \varepsilon g$ and show that this ($g$ union the upside-down version $\varepsilon g$) is stably nontrivial. This gives a nontrivial diffeomorphism of $M\times I$ fixing the boundary. 

By a general fact which should be well-known (but I don't know where to find it) any such diffeomorphism gives a nontrivial diffeomorphism of $M\times S^1$.

Finally we note that the diffeomorphism $g\cup \varepsilon g$ of $M\times I$ and the resulting diffeomorphism of $M\times S^1$ are both pseudoisotopic to the identity. The pseudoisotopy is given by the positive suspension of $g$. Since the obstruction group $Wh_1^+(\pi_1M;\ZZ)$ is a free abelian group we obtain the following.

\begin{customthm}{B}\label{thm B}
For $M=(S^1\times S^2)\#M'$ as above, there is a diffeomorphism of $M\times S^1$ every power of which is nontrivial. Furthermore this diffeomorphism is pseudoisotopic to the identity.
\end{customthm}

The paper is organized as follows. In Section 1 we review the definition of $Wh_1^+(G;A)$ for any $G$-module $A$. In Section 2 we construct the specific lens-shaped model for $M=(S^1\times S^2)\#M'$ using any nontrivial element of $\pi_1M'$. (Figure \ref{Fig: snake figure to construct ft}). Section 3 discusses the Postnikov invariant $k_1M$ and its affect on the second obstruction. We construct the ``retraction invariant'' and show it is well-defined. This proves Theorem \ref{thm A}. Section 4 deals with the involution $\varepsilon$ which turns a pseudoisotopy upside-down. We also show how to stabilize the second obstruction so we can add the invariants for $g$ and $\varepsilon g$. Section 5 goes over the ``closing the clam'' construction (Figure \ref{Fig: closing the clam}) to complete the proof of Theorem \ref{thm B}.

The author would like to thank Danny Ruberman for asking him if the 4-manifold pseudoisotopy constructed in \cite{Wh1a} gives a nontrivial diffeomorphism of the top $M\times 1$. This paper is an attempt to answer that question. The author also acknowledges support from the Simons Foundation.

\section{The second obstruction group}

We review the algebra of the second obstruction group $Wh_1^+(\pi_1M;\ZZ_2\oplus \pi_2M)$.

\begin{defn}
For any group $G$ and left $G$-module $A$ let $A[G]$ be the $G$-module $A\otimes \ZZ[G]$ with diagonal action action of $G$ where the action of $G$ on $\ZZ[G]$ is by conjugation:
\[
	g(a\otimes h)=ga\otimes ghg^{-1}.
\]
The group of coinvariants of this action is $A[G]_G=H_0(G;A[G])$. This is also isomorphic to $\ZZ[G]\otimes_GA$ if $G$ acts on $\ZZ[G]$ on the right by conjugation: $(\sum n_ig_i)\cdot h=\sum n_i\, h^{-1}gh$. $A[1]$ is a submodule of $A[G]$ and its coinvariants give 
\[
	Wh_1^+(G;A):=(A[G]/A[1])_G=H_0(G;A[G])/H_0(G;A).
\]
\end{defn}

For example, when the action of $G$ on $A$ is trivial, $Wh_1^+(G;A)$ is the direct sum of copies of $A$, one for every conjugacy class of nonidentity elements of $G$. 

In particular, one can show that the element $\alpha[\sigma]\in A[G]$ gives a nontrivial element of $Wh_1^+(G;A)$ if $\sigma$ is not the identity in $G$ and if there is a homomorphism $\varphi:A\to B$ where $B$ has trivial $G$ action so that $\varphi(\alpha)\neq0$. In that case 
\[
\varphi_\ast:Wh_1^+(G;A)\to Wh_1^+(G;B)
\]
sends $\alpha[\sigma]$ to $\varphi(\alpha)[\sigma]$ which is nontrivial in $Wh_1^+(G;B)$.

%
%

\section{Constructing the lens-shaped model} 

Let $M$ be the connected sum of $S^1\times S^2$ with another nontrivial $3$-manifold $M'$. Let $\alpha\in \pi_2M$ be given by the 2-sphere $S^2\subset S^1\times S^2$ and let $\sigma$ be any nontrivial element of $\pi_1M'\subset \pi_1M$. We will construct a lens-shaped model for $M$ with second obstruction $\alpha[\sigma]$.

Recall that a \emph{lens-shaped model} for $M$ is a 1-parameter family of functions $f_t:M\times I$ whose graphic is a 1-lens. (See Figure \ref{Fig: 1-lens}.) We refer to \cite{Wh1a} for definitions. We note however that there is one important difference between dimensions 3 and 4. In dimension 3 there are two choices for the middle two indices. We take our lens-shaped model to be a family of functions $f_t$ with critical points in indices $1,2$. Let $\cD^-(M)$ denote the space of all such lens-shaped models. Let $\cD_0^-(M)$ be the space of ``marked'' lens-shaped models. (See \cite{Wh1a}.) The purpose of the marking is to make $\pi_0\cD_0^-(M)$ into a group. To show that multiplication is well-defined we need only to observe that $O(4)/O(2)$ is simply connected. The marking has no affect on the second obstruction if we ignore the framing invariant.

\begin{figure}[htbp]
\begin{center}
\begin{tikzpicture}[scale=1.2]
\draw (-3.7,-1.5) rectangle (3.7,1.5);
\draw[thick] (-2.8,0).. controls(-1.8,0) and (-1.4,-1)..(0,-1)..controls (1.4,-1) and (1.8,0)..(2.8,0);
\draw[thick] (-2.8,0).. controls(-1.8,0) and (-1.4,1)..(0,1)..controls (1.4,1) and (1.8,0)..(2.8,0);
\begin{scope}
\draw[thick,dashed,gray] (-2.8,0).. controls(-1.8,0) and (-.7,-.4)..(0,-.4)..controls (.7,-.4) and (1.8,0)..(2.8,0);
\draw[very thick,dashed,red] (0,.0) ellipse [x radius=4mm,y radius=2mm];
\draw[thick,blue,dotted] (0,-1)--(-.4,0)--(0,1)--(.4,0)--(0,-1); 
\draw[blue] (0,.5) node{$\alpha$};
\end{scope}
\draw[thick] (-2.8,-1.4)--(-2.8,-1.6) node[below] {$t_0$};
\draw[thick] (0,-1.4)--(0,-1.6) node[below] {$t_1$};
\draw[thick] (2.8,-1.4)--(2.8,-1.6) node[below] {$t_2$};
\draw (-1.7,.8) node{$q_t$};
\draw (-1.7,-.8) node{$p_t$};
\end{tikzpicture}
\caption{Depicted is the graphic of a ``1-lens'', a 1-parameter family of functions $f_t$ representing an pseudoisotopy of a 3-manifold $M$. The points $p_t,q_t$ are critical of $f_t$ of indices 1,2. The dashed red circle and dotted blue 2-sphere are explained in the text.}
\label{Fig: 1-lens}
\end{center}
\end{figure}
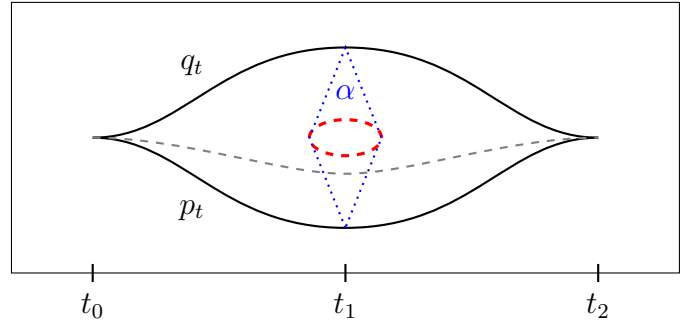

Figure \ref{Fig: 1-lens} gives the graphic of the lens-shaped model we will construct. Recall that this is the set of ordered pairs $(t,s)$ where $s$ is a critical value of $f_t$ for $t\in I$. The function $f_0:M\times I\to I$ is the projection map. At $t=t_0$ the function has a birth-death point. For $t$ slightly more than $t_0$, we have a Morse function $f_t:M\times I\to I$ with two canceling critical points $p_t,q_t$ of indices $1,2$ and there is a single trajectory of the gradient of $f_t$ going from $p_t$ to $q_t$. Along the deformation, two additional trajectories are created then canceled and these trajectories form a circle of trajectories spanning a $2$-sphere as indicated in Figure \ref{Fig: 1-lens}. For $t_0<t<t_2$, the intermediate level surface $f_t^{-1}(\frac12)$ is $V^3$ which is the connected sum of $M$ with another $S^1\times S^2$ as shown in Figure \ref{Fig: snake figure to construct ft}.

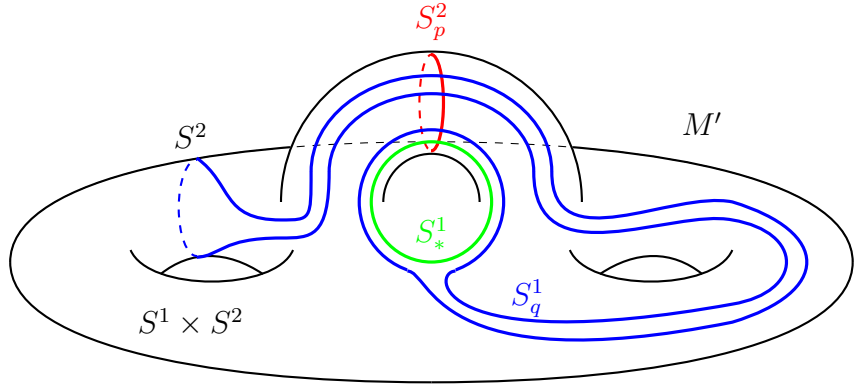
\begin{figure}[htbp]
\begin{center}
\begin{tikzpicture}[scale=.8]
\begin{scope} 
\draw[ thick] (0,2)..controls (4,2) and (7,1.3)..(7,0); 
\draw[ thick] (2.8,-.2)..controls (3.4,.2) and (4,.2)..(4.5,-.2);
\draw[ thick] (2.3,.2)..controls (2.6,-.5) and (4.7,-.5)..(5,.2);
\draw[ thick] (0,-2)..controls (4,-2) and (7,-1.3)..(7,0);
\end{scope}
\begin{scope}[xscale=-1] 
\draw[ thick] (0,2)..controls (4,2) and (7,1.3)..(7,0); 
\draw[ thick] (2.8,-.2)..controls (3.4,.2) and (4,.2)..(4.5,-.2);
\draw[ thick] (2.3,.2)..controls (2.6,-.5) and (4.7,-.5)..(5,.2);
\draw[ thick] (0,-2)..controls (4,-2) and (7,-1.3)..(7,0);
\end{scope}
\begin{scope} 
\clip (-3,1) rectangle (3,4);
\draw[ thick,fill,white] (0,1) circle[radius=2.5cm];
\draw[ thick] (0,1) circle[radius=8mm];
\draw[ thick] (0,1) circle[radius=2.5cm];
\end{scope}
\draw[dashed] (0,2)..controls (4,2) and (7,1.3)..(7,0); 
\begin{scope}[xscale=-1] 
\draw[dashed] (0,2)..controls (4,2) and (7,1.3)..(7,0); 
\end{scope}
\begin{scope}[yshift=-1mm] 
\draw[dashed,thick,red] (0,2.75) ellipse[x radius=2mm, y radius=8mm];
\clip (0,1) rectangle (.5,3.6);
\draw[very thick,red] (0,2.75) ellipse[x radius=2mm, y radius=8mm];
\end{scope}
\draw[very thick,blue] (0,1) circle[radius=1.2];
\draw[fill,white] (0,-.2) circle[radius=.4];
\draw[very thick,blue] (-.4,-.15)..controls (0,-.15) and (0,-2)..(5,-1)..controls (6.65,-.5) and (6.65,.5)..(5,1)..controls (4,1.2) and (2,0)..(2,1.5);
\draw[very thick,blue] (.4,-.15)..controls (0,-.15) and (0,-1.6)..(5,-.7)
..controls (6.2,-.5) and (6.2,.5)..(5,.7)..controls (4,.9) and (1.7,-.3)..(1.7,1.5);
\begin{scope}
\clip (-2.5,1.5) rectangle (2.5,3.2);
\draw[very thick, blue] (0,1.5) ellipse[x radius=1.7,y radius=1.3];
\draw[very thick, blue] (0,1.5) ellipse[x radius=2,y radius=1.6];
\end{scope}
\draw[very thick,blue] (-1.7,1.5) ..controls (-1.7,.4) and (-2,.4)..(-2.5,.4)..controls (-3.5,.4) and (-3.5,0.1)..(-3.9,0.08);
\draw[very thick,blue] (-2,1.5) ..controls (-2,.7) and (-2,.7)..(-2.5,.7)..controls (-3.5,.7) and (-3.3,1.3)..(-3.9,1.73);
\begin{scope}
\clip (-3.9,2)rectangle(-4.7,0);
\draw[ thick,blue,dashed] (-3.9,.9)ellipse[x radius=3mm,y radius=8mm];
\end{scope}
\draw (-4,2.1) node{$S^2$};
\draw (-4,-1) node{$S^1\times S^2$};
\draw (4.5,2.3) node{$M'$};
\draw[red] (0,4) node{$S^2_p$};
\draw[blue] (1.6,-.6) node{$S^1_q$};
\draw[very thick,green] (0,1) circle[radius=1] (0,0) node[above]{$S^1_\ast$};
\end{tikzpicture}
\caption{The level surface of the Morse function $f_t$ for $t=t_1$ (in the middle of the 1-lens in Figure \ref{Fig: 1-lens}) is shown with the stable 1-sphere $S^1_q$ of the index 2 critical point $q_t$ in blue (deformed from the standard circle $S_\ast^1$ in green) and the unstable 2-sphere $S^2_p$ of the index 1 critical point $p_t$ in red. These cross in three points. }
\label{Fig: snake figure to construct ft}
\end{center}
\end{figure}

The 1-parameter family of functions $f_t$ on $M\times I$ is constructed as follows. The function has one critical point $p_t$ of index 1. This attaches a 1-handle to $M\times [0,1/4]$ to produce the level surface $M\# (S^1\times S_p^2)$ with the meridian 2-sphere $S^2_p$ being the unstable sphere of $p_t$. This 2-sphere is shown in red in Figure \ref{Fig: snake figure to construct ft}. The longitudinal 1-sphere $S^1_\ast$ meets $S^2_p$ transversely in one point. This shown in green in Figure \ref{Fig: snake figure to construct ft}. 

For $t=t_0+\epsilon$ (a little bit past the birth point of $f_t$ at $t=t_0$), the stable sphere of $q_t$ will be the green circle $S^1_\ast$ in Figure \ref{Fig: snake figure to construct ft}. This is in ``cancelling position'' with $S^2_p$. As $t$ goes from $t_0$ to $t_2$, we deform this 2-sphere in the level surface as follows. We push one part of it through an embedded loop representing any nontrivial $\sigma\in \pi_1M'$ on the right side of Figure \ref{Fig: snake figure to construct ft}. Then we slide this ``finger'' over the 1-handle created by the critical point $p_t$ increasing to 3 the number of transverse intersection points of $S^2_p$ with $S^1_q$.

Then we wrap this ``finger'' around the 2-sphere in $S^1\times S^2$ as shown in the figure. Then we pull it back. When we pull back the blue circle, the extra two intersections with $S^2_p$ will be eliminated and the blue circle will go back to its original position shown by the green circle $S^1_\ast$ in Figure \ref{Fig: snake figure to construct ft}. Then, the critical points $p_t,q_t$ can cancel at $t=t_2$ and the ``lens'' will be complete. 

\begin{thm}[Theorem \ref{thm A}]\label{theorem A}
The pseudoisotopy of $M=(S^1\times S^2)\# M'$ given by the 1-parameter family of functions $f_t$ constructed above realizes the second obstruction 
\[
\lambda(f_t)=\alpha[\sigma]\in Wh_1^+(\pi_1M;\pi_2M)
\]
where $\alpha\in \pi_2M$ is given by the 2-sphere $S^2$ in $S^1\times S^2$ and $\sigma\in \pi_1M$ comes from a nontrivial element of $\pi_1M'$. This gives a nontrivial element of $\pi_0\cC(M)$.
\end{thm}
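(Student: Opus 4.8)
\emph{Proof proposal.} The plan is to read $\lambda(f_t)$ off the ``movie'' of attaching data along the $1$-lens of Figure \ref{Fig: 1-lens}, and then to detect its nontriviality by pushing it forward under the retraction $r\colon M\to S^2$ of the Introduction. By construction $f_t$ is a marked lens-shaped model in indices $1,2$, so it represents a class in $\pi_0\cD_0^-(M)$; as a single $1$-lens it has trivial first obstruction in $Wh_2(\pi_1M)$ --- its graphic has no crossings and the algebraic incidence $\langle q_t,p_t\rangle\in\ZZ[\pi_1M]$ equals the identity for every $t$, the extra intersection points of $S^2_p$ with $S^1_q$ being created and destroyed in oppositely oriented pairs --- so the second obstruction $\lambda(f_t)\in Wh_1^+(\pi_1M;\ZZ_2\oplus\pi_2M)$ is defined, and by the remark in Section 2 on markings its $\pi_2M$-component is independent of all choices. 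I work with that component.

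To compute it I would trace the intersection points $S^2_p\cap S^1_q$ as $t$ runs over $[t_0,t_2]$. Initially $S^1_q$ is in canceling position with $S^2_p$ (the circle $S^1_\ast$ of Figure \ref{Fig: snake figure to construct ft}); a single finger of $S^1_q$ is then pushed through the loop $\sigma\subset M'$ and dragged across the $1$-handle of $p_t$, creating exactly one pair of transverse intersection points with $S^2_p$; the finger is carried around the $2$-sphere of $S^1\times S^2$ and finally retracted, destroying the pair. Thus the $1$-manifold traced by $S^2_p\cap S^1_q$ consists of the arc of the canceling trajectory together with a single closed ``eye''. The canceling trajectory has group element $e$ and so contributes nothing to $Wh_1^+$; the eye contributes its group element weighted by its $\pi_2$-coefficient. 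The group element is the class of the loop that runs from the basepoint up a trajectory through the new intersection point from $p_t$ to $q_t$ and back, which by construction is $\sigma$; its $\pi_2$-coefficient is the class of the $2$-sphere swept by the associated family of trajectories as the finger goes around and returns, which by construction is $\alpha$. Inserting this into the second-obstruction formula of \cite{What happens} for a $1$-lens gives $\lambda(f_t)=\alpha[\sigma]$ in $Wh_1^+(\pi_1M;\pi_2M)$.

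It remains to see that this represents a nontrivial element of $\pi_0\cC(M)$. Since $\pi_1M=\ZZ\ast\pi_1M'$, the element $\sigma$ is nonidentity in $\pi_1M$; the retraction $r\colon M\to S^2$ onto the $2$-sphere of $S^1\times S^2$ induces a $\pi_1M$-equivariant map $r_\ast\colon\pi_2M\to\pi_2S^2=\ZZ$ (the target with trivial action) sending $\alpha$ to $\pm1$, so by the criterion of Section 1 the image $r_\ast(\alpha[\sigma])=\pm[\sigma]$ is nonzero in $Wh_1^+(\pi_1M;\ZZ)$; in particular $\alpha[\sigma]\neq0$ in $Wh_1^+(\pi_1M;\pi_2M)$. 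Because a lens-shaped model gives a nontrivial element of $\pi_0\cC(M)$ whenever its second obstruction is nonzero modulo $\im\chi$, I would then invoke the fact proved in Section 3 that $r_\ast\circ\chi=0$; this forces $\alpha[\sigma]$ to be nonzero already in $Wh_1^+(\pi_1M;\ZZ_2\oplus\pi_2M)/\im\chi$, and hence $f_t$ is nontrivial in $\pi_0\cC(M)$. When $M'=S^1\times S^2$ the group $\pi_1M$ is free, $k_1M=0$ and $\chi=0$, so no appeal to Section 3 is needed.

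The hard part is the computation in the second paragraph: matching the explicit surgeries on the finger --- push through $\sigma$, drag over the $1$-handle, wrap around $\alpha$, retract --- to the combinatorial input of the second-obstruction formula, i.e. checking that exactly one eye appears and that it carries precisely the group element $\sigma$ and the $\pi_2$-coefficient $\alpha$, with orientations arranged so that nothing cancels, and that neither the $\ZZ_2$-framing data nor the canceling trajectory contributes to the $\pi_2M$-component.
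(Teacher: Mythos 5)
Your proposal is correct and follows essentially the same route as the paper: you read $\alpha[\sigma]$ off the geometry of the finger move (the two extra intersection points giving $\sigma$, the swept $2$-sphere giving $\alpha$), and then invoke the retraction $r\colon M\to S^2$ together with $r_\ast\circ\chi=0$ from Section 3 to conclude nontriviality, exactly as the paper defers to Theorem \ref{thm: retraction invariant}. The extra care you take about the first obstruction vanishing and the framing/$\ZZ_2$-component not interfering is implicit in the paper but worth flagging, and you correctly identify that the genuinely hard part is matching the explicit surgeries to the combinatorial second-obstruction formula --- which the paper also treats as ``following from the construction.''
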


\begin{proof}
This follows from the construction. The two additional intersection points of $S_p^2$ with $S_q^1$ give $\pm\sigma$ in the incidence matrix of the Morse complex. The blue circle $S_q^1$ represents the base of a cone of trajectories up to $q_t$. When that base goes through the $2$-sphere in $S^1\times S^2$, that $2$-cycle is pushed into the top of the cone and the circle of trajectories from $p_t$ to $q_t$ will represent that class $\alpha\in \pi_2M$. So, the second obstruction for our lens-shaped model is $\alpha[\sigma]$. When the first Postnikov invariant $k_1M$ is trivial, as in the case $M'=S^1\times S^2$, there is nothing more to do and Theorem \ref{thm A} holds in that case. But, in general, $k_1M\neq0$. Theorem \ref{thm: retraction invariant} will complete the proof.
\end{proof}

\section{Postnikov invariant}

The \emph{first Postnikov invariant} of $M$ is the cohomology class 
\[
	k_1M\in H^3(\pi_1M;\pi_2M)
\]
which is the first obstruction to the existence of a homotopy section of the natural map $M\to B\pi_1M$.
The original formula of Hatcher and Wagoner assumed this invariant to be zero. We recall the formula for what happens when $k_1M$ is nonzero.

\begin{thm}\cite{What happens}
In the stable range ($\dim M\ge 6$) there is an exact sequence
\begin{equation}\label{eq: what happens}
	Wh_3(\pi_1M)\xrightarrow\chi Wh_1^+(\pi_1M;\ZZ_2\oplus \pi_2M)\to \pi_0\cC(M)\to Wh_2(\pi_1M)\to 0
\end{equation}
where $Wh_3(\pi_1M)$ is some quotient of $K_3(\ZZ[\pi_1M])$ and the second component of the map $\chi$, composed with this quotient map is given by
\[
	K_3(\ZZ[\pi_1M])\to H_3(GL_\infty(\ZZ[\pi_1M])\xrightarrow{\chi_{k_1}}H_0(\pi_1M;\pi_2M[\pi_1M])\to Wh_1^+(\pi_1M;\pi_2M)
\]
where ${\chi_{k_1}}$ is given at the chain level by
\[
	\chi_{k_1}(A,B,C)=\sum_{i,j,k,\ell} f(a_{ij}\otimes b_{jk}\otimes c_{k\ell})[d_{\ell i}]\in \pi_2M[\pi_1M]
\]
for all $A,B,C\in GL_n(\ZZ[\pi_1M])$ where $a_{ij}, b_{jk}, c_{k\ell},d_{\ell i}\in\ZZ[\pi_1M]$ are the entries of $A,B,C$, $D=(ABC)^{-1}$ and $f:\ZZ[\pi_1M]\otimes \ZZ[\pi_1M]\otimes \ZZ[\pi_1M]\to \pi_2M$ is the linearization of the 3-cocycle representing $k_1M$.
\end{thm}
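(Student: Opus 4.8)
The plan is to set up \eqref{eq: what happens} as the Hatcher--Wagoner obstruction sequence for $\pi_0\cC(M)$, quoting from \cite{HW} everything that does not see the Postnikov invariant --- the exactness skeleton, the surjection onto $Wh_2$, and the $Wh_1^+(\pi_1M;\ZZ_2)$ summand of $\chi$ --- and putting all of the new work into the $\pi_2M$ component of $\chi$. First I would replace a pseudoisotopy $F\in\cC(M)$ by the function $f=\mathrm{pr}_I\circ F\colon M\times I\to I$, join $f$ to the projection by a generic one-parameter family $f_t$, and put the resulting graphic into the standard normal form available once $\dim M\ge 6$. The closed loops of handle slides between two consecutive index levels (the ``eyes'' of the graphic) define the first obstruction $\Sigma(F)\in Wh_2(\pi_1M)$, and the surjectivity of $\pi_0\cC(M)\to Wh_2(\pi_1M)$ together with the identification of $\ker\Sigma$ with the image of the second obstruction map out of $Wh_1^+(\pi_1M;\ZZ_2\oplus\pi_2M)$ are exactly the Hatcher--Wagoner theorems. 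What remains is to compute the kernel of the second obstruction map, i.e.\ the image of $\chi$, and in particular its $\pi_2M$ part.

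For that I would work with $H_3(GL_\infty(\ZZ[\pi_1M]))$, which receives $K_3(\ZZ[\pi_1M])$ by the Hurewicz map and is computed by the bar complex, so a homology class is represented by a $\ZZ$-linear combination of bar symbols $[A\,|\,B\,|\,C]$ with $A,B,C\in GL_n(\ZZ[\pi_1M])$. Geometrically each symbol is realized by a ``picture'': a two-parameter family of functions on $M\times I^2$, i.e.\ a deformation of a loop of deformations of the handle decomposition, in which the three successive bursts of handle slides are recorded by $A$, $B$, $C$ while the slides that close the loop up are recorded by $D=(ABC)^{-1}$. Dragging a canceling pair $(p,q)$ of critical points in the two index levels that carry the second obstruction through this picture, and reading off the resulting element of $Wh_1^+(\pi_1M;\pi_2M)$, is by definition the value of $\chi$ on the original class.

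The heart of the argument is the local model at the finitely many ``triangle'' events, where handles of the three index levels of the picture interact with the canceling pair $(p,q)$ simultaneously. At such an event a circle of gradient trajectories from $q$ to $p$ is born and then dies, spanning a $2$-sphere in the relevant intermediate level surface and carrying a loop --- hence a well-defined group element --- that threads the triangle. I would show that the class in $\pi_2M$ of this $2$-sphere is $f(a_{ij}\otimes b_{jk}\otimes c_{k\ell})$: the sphere is assembled from the three ``walls'' bearing the coefficients $a_{ij}$, $b_{jk}$, $c_{k\ell}$ over a common triangle, so it represents precisely the obstruction --- measured by the $3$-cocycle $k_1M$, extended $\ZZ[\pi_1M]$-linearly to $f$ --- to filling in the corresponding singular $2$-simplex of $M$ compatibly with the map $M\to B\pi_1M$, while the threading element is $d_{\ell i}$, the entry of $D$ returning the $i$-th handle to its starting position. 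Summing over the triangle events indexed by $i,j,k,\ell$ gives $\chi_{k_1}(A,B,C)=\sum f(a_{ij}\otimes b_{jk}\otimes c_{k\ell})[d_{\ell i}]\in\pi_2M[\pi_1M]$, and passing to coinvariants and then killing the identity-decorated submodule produces the composite into $Wh_1^+(\pi_1M;\pi_2M)$ of the statement.

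Finally I would check that $\chi_{k_1}$ is a chain map --- that it vanishes on the boundaries of bar $4$-chains, hence descends to $H_3$ --- and that its value is independent of the picture realizing a given class and of the normal-form choices; this is exactly what forces the passage to the ``trivial unit'' quotients $Wh_3$ and $Wh_2$, which absorb the remaining ambiguities, and exactness of \eqref{eq: what happens} at $Wh_1^+(\pi_1M;\ZZ_2\oplus\pi_2M)$ then follows by combining this with the Hatcher--Wagoner input. The main obstacle, as always in this subject, is the local step: one must verify that the $2$-sphere produced at a triangle event depends on the incidence data only through the trilinear combination $f(a_{ij}\otimes b_{jk}\otimes c_{k\ell})$ --- so that framing discrepancies and lower-order corrections either cancel in pairs or fall into the already understood $\ZZ_2$ summand --- and that the base-point bookkeeping correctly pins the decorating element down to $d_{\ell i}$; making this geometric picture agree with the bar-complex differential on the nose is the delicate part.
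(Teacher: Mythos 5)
The statement you are asked to prove is \emph{not proved in this paper}: it is quoted with the citation \cite{What happens}, and the body of the paper uses it (via the naturality corollary) without supplying an argument. So there is no internal proof here to compare against --- the real target is the argument in the Lecture Notes volume, not anything on the page.

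Taking your plan on its own terms: the architecture is right. Quoting Hatcher--Wagoner for the exactness skeleton, the $Wh_2$ surjection, and the $\ZZ_2$ summand; realizing $H_3(GL_\infty(\ZZ[\pi_1M]))$ by bar symbols $[A\,|\,B\,|\,C]$ and geometrically by two-parameter ``pictures''; extracting the $\pi_2M$ decoration by dragging a canceling pair through the picture; and verifying the cocycle/bar-boundary compatibility to pass to $Wh_3$ and $Wh_1^+$ --- all of this is the shape of the argument in \cite{What happens}. But the single step that actually carries the content of the theorem, namely that the $2$-sphere produced at a ``triangle event'' has $\pi_2M$ class exactly $f(a_{ij}\otimes b_{jk}\otimes c_{k\ell})$ and is threaded by exactly $d_{\ell i}$, is asserted rather than argued. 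You flag this yourself as ``the delicate part,'' but as written it is a gap, not a proof: nothing in your sketch pins down why the $3$-cocycle representing $k_1M$, rather than some other trilinear functional in the incidence data, is what measures the $\pi_2$ class, nor why the base-point bookkeeping returns $d_{\ell i}$ rather than some other entry of $D$, nor why the resulting assignment is invariant under the cocycle-level choice of representative for $k_1M$ (a coboundary change must shift $\chi_{k_1}$ by something landing in $A[1]$ or by a bar boundary). These are exactly the verifications that the cited reference is about. As a research plan your proposal points at the right paper; as a proof it defers the theorem's core computation to a one-sentence local claim.

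Two smaller points worth being careful about if you carry this out. First, $f$ is only defined after \emph{choosing} a normalized cocycle representative of $k_1M$ and extending $\ZZ$-trilinearly; your well-definedness discussion needs to handle coboundary changes, not merely bar boundaries. Second, the chain-level formula takes values in $\pi_2M[\pi_1M]$ before coinvariants; the identity $\sum_{i,j,k} f(a_{ij}\otimes b_{jk}\otimes c_{ki})[1]=0$ in the quotient $Wh_1^+$ (the case $D=I$, $\ell=i$) is what lets the formula descend, and it is not automatic --- it is where the passage to the $Wh$-quotient is used, and your plan should say so explicitly rather than folding it into ``remaining ambiguities.''
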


The important aspect of the formula is its naturality:

\begin{cor}
Let $\varphi:\pi_2M\to A$ be a homomorphism of $\pi_1M$ modules and let $k_1A$ be the image of $k_1M$ under the induced map $H^3(\pi_1M;\pi_2M)\to H^3(\pi_1M;A)$. Then the following diagram commutes
\[
\xymatrixrowsep{10pt}\xymatrixcolsep{60pt}
\xymatrix{
& H_0(\pi_1M;\pi_2M[\pi_1M])\ar[dd]^{\varphi_\ast}\\
H_3(GL_\infty(\ZZ[\pi_1M])\ar[ru]^{\chi_{k_1M}}\ar[rd]^{\chi_{k_1A}} \\
& H_0(\pi_1M;A[\pi_1M])
	}
\]
\end{cor}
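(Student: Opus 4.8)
The plan is to verify the commutativity directly at the chain level, using the explicit cocycle formula for $\chi_{k_1M}$ given in the preceding theorem. The key observation is that both composites in the square are built out of the \emph{same} universal chain-level construction $(A,B,C)\mapsto \sum f(a_{ij}\otimes b_{jk}\otimes c_{k\ell})[d_{\ell i}]$, and the only input that changes is the linearized $3$-cocycle $f$ and the coefficient module in which it lands. So everything reduces to tracking how $\varphi$ interacts with that one formula.

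First I would recall that $k_1A\in H^3(\pi_1M;A)$ is by definition the image of $k_1M$ under the change-of-coefficients map induced by $\varphi$, so one may pick a cocycle representative for $k_1A$ of the form $\varphi\circ c$ where $c$ is a cocycle representing $k_1M$ (here I use that change of coefficients on cochains is literally post-composition with $\varphi$, and that it takes cocycles to cocycles). Linearizing, the corresponding map $\ZZ[\pi_1M]^{\otimes 3}\to A$ associated to $k_1A$ can be taken to be $\varphi\circ f$, where $f:\ZZ[\pi_1M]^{\otimes 3}\to\pi_2M$ is the linearization of $c$. This is the only place where a genuine choice is involved, and it is where I expect to have to be a little careful: I must check that the linearization operation commutes with post-composition by $\varphi$ (immediate from additivity of $\varphi$) and that the class of the resulting chain map in $H_3(GL_\infty)\to H_0(\pi_1M;A[\pi_1M])$ is independent of the cocycle representative chosen — this is exactly the content of the theorem being quoted, applied to the module $A$, so it is free.

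Next, with these representatives fixed, I would compute: for $(A,B,C)\in GL_n(\ZZ[\pi_1M])$, the lower composite $\chi_{k_1A}(A,B,C)$ equals $\sum_{i,j,k,\ell}(\varphi f)(a_{ij}\otimes b_{jk}\otimes c_{k\ell})[d_{\ell i}] = \sum \varphi\big(f(a_{ij}\otimes b_{jk}\otimes c_{k\ell})\big)[d_{\ell i}]$, while the upper-then-$\varphi_\ast$ composite sends $(A,B,C)$ first to $\sum f(a_{ij}\otimes b_{jk}\otimes c_{k\ell})[d_{\ell i}]\in\pi_2M[\pi_1M]$ and then applies $\varphi_\ast$, which by definition of $\varphi_\ast$ on $H_0(\pi_1M;-[\pi_1M])$ acts as $\alpha[\sigma]\mapsto\varphi(\alpha)[\sigma]$ on the basis elements, i.e.\ term by term on the sum. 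The two expressions agree on the nose, so the square of chain maps commutes; passing to homology gives the corollary. The verification that $\varphi_\ast$ is well defined on coinvariants (i.e.\ respects the diagonal $\pi_1M$-action $g(a\otimes h)=ga\otimes ghg^{-1}$) is the only remaining point, and it follows since $\varphi$ is a map of $\pi_1M$-modules: $\varphi(ga)=g\varphi(a)$, so $\varphi\otimes\mathrm{id}$ is $\pi_1M$-equivariant and descends to $H_0$.

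The main obstacle, such as it is, is bookkeeping rather than mathematics: making sure that the cocycle representatives for $k_1M$ and $k_1A$ are chosen \emph{compatibly} (one the $\varphi$-image of the other) so that the chain-level formulas literally match, and invoking the well-definedness statement from the quoted theorem to know that this choice does not affect the map on homology. Once that compatibility is in place, commutativity is a termwise identity. I would therefore organize the write-up as: (1) fix $c$ and $f$ for $k_1M$; (2) observe $\varphi c$, $\varphi f$ represent/linearize $k_1A$; (3) compare the two termwise formulas; (4) note $\varphi_\ast$ is well defined on coinvariants because $\varphi$ is $\pi_1M$-linear; (5) pass to homology.
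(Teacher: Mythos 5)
Your proof is correct and supplies exactly the chain-level naturality argument that the paper leaves implicit; the paper gives no proof for this corollary, introducing it only as ``the important aspect of the formula is its naturality,'' and your steps (choose $\varphi\circ c$ and $\varphi\circ f$ as representatives for $k_1A$ and its linearization, observe the termwise identity $\sum\varphi\bigl(f(a_{ij}\otimes b_{jk}\otimes c_{k\ell})\bigr)[d_{\ell i}]=\sum(\varphi f)(a_{ij}\otimes b_{jk}\otimes c_{k\ell})[d_{\ell i}]$, and check that $\varphi\otimes\mathrm{id}$ is $\pi_1M$-equivariant so it descends to coinvariants) spell out precisely that intended argument. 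One cosmetic note: the displayed diagram is a commutative triangle rather than a square, but the comparison you carry out, $\varphi_\ast\circ\chi_{k_1M}$ versus $\chi_{k_1A}$, is exactly what is needed.
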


We use the naturality of the Postnikov invariant to show that the map $\chi$ does not hit our two second obstruction elements $\alpha[\sigma]$ and $\alpha[\sigma]+\alpha[\sigma^{-1}]$ (which will occur later).

The connected sum $M=S^1\times S^2\#M'$ has separating $2$-sphere which cuts $M$ into two $3$-manifolds with boundary $S^2$, call them $W,W'$. Thus $W$ is $S^1\times S^2$ minus a $3$-ball and $W'$ is $M'$ minus a $3$-ball and $M/W'=S^1\times S^2$. Let $X=W'\vee S^1$. This embeds in $M$ with the same fundamental group $\pi_1X\cong \pi_1M=\pi$. By naturality of the Postnikov invariant, the map in cohomology
\[
	j_\ast:H^3(\pi_1X;\pi_2X)\to H^3(\pi_1M;\pi_2M)
\]
induced by the inclusion map $j:X\into M$ sends $k_1X$ to $k_1M$.

The inclusion map $S^2\to M$ has a retraction $r:M\to S^2$ given by first pinching $W'\subset M$ to a point, then projecting to the factor $S^2$:
\[
	M=(S^1\times S^2)\# M'\to S^1\times S^2\to S^2.
\]
The induced map on $\pi_2$, $\pi_2M\to \pi_2S^2=\ZZ$ is $\pi_1M$ equivariant since it factors through the Hurewicz map $\pi_2M\to H_2(M)$. Since this retraction sends $X$ to one point, the induced map in cohomology 
\[
r_\ast: H^3(\pi_1M;\pi_2M)\to H^3(\pi_1M;\ZZ)
\]
given by the coefficient map $\pi_2M\to \pi_2S^2=\ZZ$ induced by $r$ sends $k_1M$ to $0$.

The naturality argument is the following commuting diagram.
\[
\xymatrix{
K_3(\ZZ[\pi_1M])\otimes H^3(\pi_1M;\pi_2M)\ar[d]^{id\otimes r_\ast}\ar[r] & H_0(\pi_1M;\pi_2M[\pi_1M])\ar[d]\ar[r]& Wh_1^+(\pi_1M;\pi_2M)\ar[d]\\
K_3(\ZZ[\pi_1M])\otimes H^3(\pi_1M;\ZZ)\ar[r] & H_0(\pi_1M;\ZZ[\pi_1M])\ar[r]& 
Wh_1^+(\pi_1M;\ZZ)
	}
\]
Since $k_1M$ goes to $0$ in $H^3(\pi_1M;\ZZ)$, the image of $K_3(\ZZ[\pi_1M])$ in $Wh_1^+(\pi_1M;\pi_2M)$ goes to zero in $Wh_1^+(\pi_1M;\ZZ)$. Since the second obstruction element $\alpha[\sigma]\in Wh_1^+(\pi_1M;\pi_2M)$ goes to $1[\sigma]\in Wh_1^+(\pi_1M;\ZZ)$, it survives to $\pi_0\cC(M)$ showing that the corresponding pseudoisotopy is nontrivial and, furthermore, gives a nontrivial element of $\pi_0\cP(M)$.

The group $Wh_1^+(\pi_1M;\ZZ)$ is the free abelian group generated by the set of conjugacy classes of nontrivial elements of $\pi_1M$. Thus, $\alpha[\sigma]$ and $\alpha[\sigma]+\alpha[\sigma^{-1}]$ map to $[\sigma]$ and $[\sigma]+[\sigma^{-1}]$ which are both nonzero in $Wh_1^+(\pi_1M;\ZZ)$ even in the case when $\sigma$ is conjugate to its inverse in which case we would get $[\sigma]+[\sigma^{-1}]=2[\sigma]$.

\begin{thm}\label{thm: retraction invariant}
The second obstruction elements $\alpha[\sigma]$ and $\alpha[\sigma]+\alpha[\sigma^{-1}]$ are not in the image of $\chi:Wh_3(\pi_1M)\to Wh_1^+(\pi_1M;\ZZ_2\oplus \pi_2M)$ and therefore survive to nontrivial elements of $\pi_0\cP(M)$. In particular, our construction gives two nontrivial elements of $\pi_0\cC(M)$.
\end{thm}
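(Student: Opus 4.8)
The plan is to detect the two obstruction classes by pushing them forward into a free abelian group, where nonvanishing is transparent, and to show that the image of $\chi$ dies under the same map. Concretely, the retraction $r\colon M\to S^2$ of the preceding discussion induces a $\pi_1M$-equivariant coefficient homomorphism $\pi_2M\to\pi_2S^2=\ZZ$ (equivariant because it factors through the Hurewicz map), hence a homomorphism
\[
	r_\ast\colon Wh_1^+(\pi_1M;\pi_2M)\to Wh_1^+(\pi_1M;\ZZ),
\]
whose target is the free abelian group on the conjugacy classes of nontrivial elements of $\pi_1M$. Since $\alpha$ maps to the generator of $\ZZ$, we get $r_\ast(\alpha[\sigma])=[\sigma]$ and $r_\ast(\alpha[\sigma]+\alpha[\sigma^{-1}])=[\sigma]+[\sigma^{-1}]$; both are nonzero, the second even when $\sigma$ is conjugate to $\sigma^{-1}$, since then it equals $2[\sigma]$ and the group is torsion-free. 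The same computation shows every nonzero multiple of $\alpha[\sigma]$ has nonzero image, which is what is needed for the ``every power is nontrivial'' statement of Theorem \ref{thm A}. So it suffices to prove $r_\ast\circ\chi=0$.

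For that I would invoke the naturality of the formula of \cite{What happens} — the Corollary above — with $\varphi=r_\ast\colon\pi_2M\to\ZZ$. It identifies $r_\ast$ applied to the $\pi_2M$-component of $\chi$ with the map $\chi_{k_1\ZZ}$ built from the class $r_\ast(k_1M)\in H^3(\pi_1M;\ZZ)$. But $r_\ast(k_1M)=0$: the wedge $X=W'\vee S^1$ embeds in $M$ with $\pi_1X\cong\pi_1M$, so naturality of the Postnikov invariant sends $k_1X$ to $k_1M$, while $r$ collapses $X$ to a point, so the coefficient change $r_\ast$ on $H^3(\pi_1M;-)$ annihilates $k_1M$. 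Hence $\chi_{k_1\ZZ}=0$ and $r_\ast$ kills the $\pi_2M$-component of $\im\chi$. The $\ZZ_2$-component of $\chi$ plays no role here: $\alpha[\sigma]$ and $\alpha[\sigma]+\alpha[\sigma^{-1}]$ have trivial $\ZZ_2$-component, so if either equalled $\chi(x)$ we could project to $Wh_1^+(\pi_1M;\pi_2M)$, apply $r_\ast$, and contradict $r_\ast(\alpha[\sigma])\neq0$. This proves the first assertion of the theorem.

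To pass from ``not in $\im\chi$'' to ``nontrivial in $\pi_0\cP(M)$'' I would stabilize: for $n$ large, $M\times I^n$ has dimension $\ge6$, so the exact sequence \eqref{eq: what happens} applies and identifies the image of $Wh_1^+(\pi_1M;\ZZ_2\oplus\pi_2M)$ in $\pi_0\cC(M\times I^n)$ with $\coker\chi$. Using that the (signed) second obstruction of the $3$-dimensional lens-shaped model is a stable invariant — this is exactly why the sign $(-1)^k$ is introduced, so that it commutes with $\sigma_+$ and anticommutes with $\sigma_-$ — and that $r_\ast$ and $\chi$ are compatible with the suspensions, the classes $\alpha[\sigma]$ and $\alpha[\sigma]+\alpha[\sigma^{-1}]$, and their nonzero multiples, map to nonzero elements of $\pi_0\cP(M)$. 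Hence any pseudoisotopy whose second obstruction is one of these classes stays nontrivial after stabilization; in particular the pseudoisotopy of Theorem \ref{theorem A} is nontrivial with every power nontrivial. Together with the construction of Section 2 this completes the proof of Theorem \ref{thm A}.

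The main obstacle, as the organization of the paper suggests, is not the Postnikov computation — which is the formal naturality argument above, together with the elementary facts that $r_\ast$ is well defined and surjective on $Wh_1^+$ — but the stabilization step: one must check that the second obstruction of the explicit dimension-$3$ model is well defined and stable, i.e. unchanged under $\sigma_+$ and negated under $\sigma_-$ once the sign $(-1)^k$ is inserted, and that it is this stable invariant to which \eqref{eq: what happens} applies. That verification, together with the ``closing the clam'' construction needed to descend from $M\times I$ to $M\times S^1$, is the content of Sections 4 and 5.
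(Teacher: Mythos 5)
Your proposal is correct and follows essentially the same route as the paper: detect $\alpha[\sigma]$ and $\alpha[\sigma]+\alpha[\sigma^{-1}]$ via the retraction-induced map $r_\ast\colon Wh_1^+(\pi_1M;\pi_2M)\to Wh_1^+(\pi_1M;\ZZ)$, kill $r_\ast(k_1M)$ by collapsing $X=W'\vee S^1$, and apply naturality of the Hatcher--Wagoner correction term to conclude $r_\ast\circ\chi=0$ on the $\pi_2M$-component. Your explicit handling of the $\ZZ_2$-summand and your flagging of the stabilization step as the real work (deferred to Section 4) make explicit what the paper leaves implicit, but the argument is the same.
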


\section{Involution and suspension}

For a pseudoisotopy $g\in \cC(M)$ we have an involution $\varepsilon$ which acts by
\[
	\varepsilon(g)=(r(g)\times id_I)^{-1}\circ\tau\circ g\circ\tau
\] 
where $\tau$ is the automorphism of $M\times I$ given by $\tau(x,t)=(x,1-t)$ and $r(g)$ is the restriction of $g$ to $M\times 1$. On the corresponding family of functions $f_t:M\times I\to I$ (with $f_0$ the identity and $f_1=p_2\circ g$, $p_2$ being projection to $I$), the involution $\varepsilon$ acts by 
\[
\varepsilon(f_t)(x,s)=1-f_t(x,1-s).
\]
\begin{lem}
The pseudoisotopy $g\circ \varepsilon(g)$ is the identity on both top and bottom of $M\times I$ and, after expanding $I$ to $[0,2]$, $g\circ \varepsilon(g)$ is isotopic to $g\cup \tau\circ g\circ \tau$.
\end{lem}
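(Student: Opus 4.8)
The plan is to track what $g\circ\varepsilon(g)$ does on each level surface $M\times\{t\}$ and to produce an explicit isotopy to the "stacked" pseudoisotopy $g\cup(\tau\circ g\circ\tau)$ on the doubled interval. First I would unwind the definitions at the boundary. Since $g$ is a pseudoisotopy, $g|_{M\times 0}=\mathrm{id}$, while $g|_{M\times 1}=r(g)$. Plugging $t=1$ into $\varepsilon(g)=(r(g)\times\mathrm{id}_I)^{-1}\circ\tau\circ g\circ\tau$ and using $\tau(x,0)=(x,1)$, one checks $\varepsilon(g)|_{M\times 1}=\mathrm{id}$, so $g\circ\varepsilon(g)$ is the identity on $M\times 1$; and $\varepsilon(g)|_{M\times 0}$ is computed from $g|_{M\times 1}=r(g)$ precomposed and postcomposed appropriately to give $r(g)^{-1}\circ r(g)=\mathrm{id}$, hence $g\circ\varepsilon(g)=\mathrm{id}$ on $M\times 0$ as well. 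This is the first assertion.

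Next I would set up the identification of $M\times I$ (with $I=[0,1]$, parametrized so that $\varepsilon$ flips $t\mapsto 1-t$) with $M\times[0,2]$ by a diffeomorphism $\Phi$ that is the "linear rescaling" sending $[0,\tfrac12]$ to $[0,1]$ and $[\tfrac12,1]$ to $[1,2]$ (smoothed near $t=\tfrac12$; since everything in sight is the identity near the middle level after a preliminary isotopy, the smoothing is harmless). The key observation is that $g\circ\varepsilon(g)$, being a composite of two pseudoisotopies, can be "decomposed" along the middle level: on the lower half $M\times[0,\tfrac12]$ it agrees up to rescaling with one copy of $g$ (this uses that $\varepsilon(g)$ is the identity on $M\times 0$ and that the composite with $g$ there reproduces $g$), and on the upper half $M\times[\tfrac12,1]$ it agrees up to rescaling with $\tau\circ g\circ\tau$ — precisely because the $(r(g)\times\mathrm{id}_I)^{-1}$ factor in $\varepsilon(g)$ is exactly what cancels against the $r(g)$-discrepancy of $g$ at the top, leaving the conjugate $\tau g\tau$. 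Assembling these two halves along the common middle level $M\times\{\tfrac12\}$, where both sides restrict to the identity, gives the claimed isotopy $g\circ\varepsilon(g)\simeq g\cup(\tau\circ g\circ\tau)$ on $M\times[0,2]$.

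The genuine work is in the middle step: one does not literally have $g\circ\varepsilon(g)$ split as a union along $M\times\{\tfrac12\}$ on the nose, because $g$ need not be the identity on any interior level. The fix is a standard compression/isotopy argument: first isotope $g$ (rel $M\times 0$) so that it is supported in $M\times[0,\tfrac12]$, i.e. equals $r(g)\times\mathrm{id}$ on $M\times[\tfrac12,1]$; this is possible since $\cC(M)$ deformation retracts onto pseudoisotopies supported in any fixed lower collar (an Alexander-type isotopy in the $I$-direction). For such a representative, $\varepsilon(g)$ is visibly supported in $M\times[\tfrac12,1]$, the two factors of $g\circ\varepsilon(g)$ commute, and the composite is exactly $g$ on the lower half and $\tau\circ g\circ\tau$ on the upper half, so $\Phi$ carries it to $g\cup\tau g\tau$ with no further adjustment. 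I expect this compression step — checking that it can be done naturally enough that it does not disturb the boundary behavior and is compatible with $\varepsilon$ — to be the main obstacle, though it is routine; once it is in place the rest is bookkeeping with $\tau$ and the rescaling $\Phi$.
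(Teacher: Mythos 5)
Your proof follows essentially the same approach as the paper: arrange for $g$ to be a product $r(g)\times\mathrm{id}$ on the upper portion so that $\varepsilon(g)$ becomes the identity on the lower portion, observe that the composite then decomposes cleanly, and rescale. The paper achieves this by extending $g$ to $M\times[0,2]$ with the product $r(g)\times\mathrm{id}_{[1,2]}$; your compression of $g$ to $M\times[0,\tfrac12]$ is the same move performed before rescaling rather than after. However, two steps as written are incorrect, even though the conclusions you draw from them happen to be right.

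First, the boundary computation is off. Tracing through the definition, $\varepsilon(g)(x,1)=(r(g)\times\mathrm{id})^{-1}\circ\tau\circ g\circ\tau(x,1)=(r(g)\times\mathrm{id})^{-1}\circ\tau\circ g(x,0)=(r(g)\times\mathrm{id})^{-1}(x,1)=(r(g)^{-1}(x),1)$, so $\varepsilon(g)|_{M\times 1}=r(g)^{-1}$, \emph{not} the identity as you claim. Indeed your own earlier observation $g|_{M\times 1}=r(g)$ combined with your claim $\varepsilon(g)|_{M\times 1}=\mathrm{id}$ would give $g\circ\varepsilon(g)|_{M\times 1}=r(g)\ne\mathrm{id}$, contradicting what you want. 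The correct reason $g\circ\varepsilon(g)$ is the identity on top is the cancellation $r(g)\circ r(g)^{-1}=\mathrm{id}$; this is precisely the same cancellation the paper exploits in the body of the argument.

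Second, even after the compression the two factors $g$ and $\varepsilon(g)$ do not in general commute. On $M\times[\tfrac12,1]$ one has $g=r(g)\times\mathrm{id}$ and $(r(g)\times\mathrm{id})\circ\varepsilon(g)=\tau g\tau$, while $\varepsilon(g)\circ(r(g)\times\mathrm{id})=(r(g)\times\mathrm{id})^{-1}\circ\tau\circ g\circ(r(g)\times\mathrm{id})\circ\tau$; these agree only if $g$ commutes with $r(g)\times\mathrm{id}$ on $M\times[0,\tfrac12]$, which need not hold. Fortunately your argument does not use commutativity: the direct computation that $g\circ\varepsilon(g)$ (in that order) equals $g$ on $M\times[0,\tfrac12]$ and $\tau g\tau$ on $M\times[\tfrac12,1]$ is correct as stated and is all that is needed. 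I suggest dropping the commutativity remark and correcting the $\varepsilon(g)|_{M\times 1}$ computation.
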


\begin{proof}
When expanding $I$ to $[0,2]$, we need to extend $g$ to $M\times [1,2]$ by $r(g)\times id_{[1,2]}$. We shift $\varepsilon(g)$ to $M\times [1,2]$ and extend to $M\times I$ by the identity. Then $r(g)\times id$ cancels $(r(g)\times id)^{-1}$ and the composition becomes $g$ on $M\times I$ and $\tau\circ g\circ \tau$ on $M\times [1,2]$.
\end{proof}

We denote $g\cup \tau\circ g\circ \tau$ by $g\cup \varepsilon(g)$ even though it is not quite correct. The corresponding family of functions is $f_t\cup \varepsilon(f_t)$ which is a union of two lenses, one in indices $1,2$ and the other in indices $2,3$. These lie in two different groups: $\pi_0\cD_0^-(M)$ and $\pi_0\cD_0^+(M)$ where $\cD_0^+(M)$ is the space of marked lens-shaped models in indices $2,3$. To make the homomorphism
\[
 \varepsilon:\pi_0\cD_0^-(M)\to\pi_0\cD_0^+(M)
\]
well-defined on the markings, we take the orientation at the birth point of a marked lens-shaped model in $\cD_0^-(M)$, pull it back to the base point of $M\times I^2$, take the complementary orientation using a fixed orientation of the tangent space at the base point, then push it to the base point of the upside-down lens-shaped model in $\cD_0^+(M)$. This uses the fact that both $O(4)/O(2)$ and $O(4)/SO(2)\times O(1)$ are simply connected.

We need to find compatible definitions of the second invariant on $\cD_0^+(M)$ and $\cD_0^-(M)$ in order to show that $g\cup\varepsilon(g)$ is nontrivial.

\subsection{Suspension}

We recall the (positive) suspension operator
\[
	\sigma_+:\cC(M)\to \cC(M\times [-1,1])
\]
which is given in modified polar coordinates $[r,\theta]\in [-1,1]\times [0,\pi]$ (explained below) by
\[
	\sigma_+(g)(x,r,\theta)=(g(x,r),\theta)
\]
where the relation to standard coordinates in $I\times [-1,1]$ is
\[
	(r,\theta)\leftrightarrow (0,1)-((1-r)\cos\theta,(1-r)\sin\theta).
\]
We assume that $g$ is a diffeomorphism of $M\times I$ which is the identity near the bottom and sides and equal to $r(g)\times id_I$ near the top. We extend $g$ to $M\times [-1,1]$ by taking $g$ to be the identity on $M\times [-1,0]$. Figure \ref{Fig: positive suspension} is the standard visualization of the suspension. The following follows directly from this description.

\begin{prop}
The top of the concordance $\sigma_+(g)$ is the diffeomorphism of $M\times [-1,1]$ given by $g$ (shifted down by 1) on $M\times [-1,0]$ and $\varepsilon g$ on $M\times [0,1]$. In particular, $g\cup  \varepsilon g$ is pseudoisotopic to the identity on $M\times [-1,1]$. See Figure \ref{Fig: positive suspension}.
\end{prop}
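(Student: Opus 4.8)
The plan is to compute the top of the concordance $\sigma_+(g)$ directly from the coordinate formula, after which the statement is exactly what Figure \ref{Fig: positive suspension} depicts. Write $\bar g$ for the extension of $g$ to $M\times[-1,1]$ by the identity over $M\times[-1,0]$, so that the suspension formula reads $\sigma_+(g)(x,r,\theta)=(\bar g(x,r),\theta)$. Under the conversion $(r,\theta)\leftrightarrow(0,1)-((1-r)\cos\theta,(1-r)\sin\theta)$, the modified polar chart covers a half-disk inside the square spanned by the suspended coordinate and the pseudoisotopy coordinate: the edge $r=1$ maps to the degenerate centre, the edge $r=-1$ to the bounding semicircle, the bottom face of the concordance lies outside this half-disk, and the top face is the flat diameter edge, which is the union of the arc $\theta=0$ and the arc $\theta=\pi$, meeting at the centre.

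First I would verify $\sigma_+(g)\in\cC(M\times[-1,1])$. Off the half-disk $\sigma_+(g)$ is the identity, and this patches $C^\infty$-smoothly onto the polar formula: near $r=-1$ one has $\bar g=id$ (this uses the extension by the identity on $M\times[-1,0]$ together with the hypothesis that $g$ is the identity near $M\times 0$ and near $\partial M\times I$), while near $r=1$ one has $\bar g=r(g)\times id$ (this uses the hypothesis that $g=r(g)\times id$ near $M\times 1$), which is $\theta$-independent as is forced at the degenerate centre. In particular $\sigma_+(g)$ is the identity on the bottom face.

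Next I would read off the top face. On the arc $\theta=0$ the conversion sends $r\in[0,1]$ to suspended coordinate $r-1\in[-1,0]$, so this arc parametrizes $M\times[-1,0]$; substituting $\theta=0$ into $\sigma_+(g)(x,r,\theta)=(g(x,r),0)$ (where $\bar g=g$) and translating through the conversion gives, after the reparametrization $u\mapsto u-1$, exactly $g$ shifted down by $1$. On the arc $\theta=\pi$ the conversion sends $r\in[0,1]$ to suspended coordinate $1-r\in[0,1]$, so this arc parametrizes $M\times[0,1]$; here the $\sin\theta$ term of the conversion reverses the interval coordinate, and the same substitution gives $\tau\circ g\circ\tau$ on $M\times[0,1]$ with $\tau(x,u)=(x,1-u)$. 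By the definition of $\varepsilon$ this is $\varepsilon g$ up to the factor $r(g)\times id$, and that factor is precisely what makes the two pieces agree along $M\times\{0\}$ (both restrict there to $r(g)\times id$, just as in the Lemma above). Hence the top of $\sigma_+(g)$ is the diffeomorphism $g\cup\varepsilon g$ of $M\times[-1,1]$.

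Since $\sigma_+(g)$ is a concordance of $M\times[-1,1]$ with bottom the identity and top $g\cup\varepsilon g$, the diffeomorphism $g\cup\varepsilon g$ is pseudoisotopic to the identity, which is the claim. The step I expect to need the most care is the $C^\infty$-matching above --- that the polar formula extends smoothly by the identity across $r=-1$ and descends smoothly across the degenerate centre $r=1$ --- and this is exactly where the normalization hypotheses on $g$ are used; the identification of the top face is then a direct substitution of $\theta=0$ and $\theta=\pi$ into the formula, as in Figure \ref{Fig: positive suspension}.
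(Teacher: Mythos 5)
Your proof is correct and takes the same approach the paper intends: the paper gives no separate argument, stating only that the proposition ``follows directly from this description'' of the modified polar coordinates, and your careful reading of the formula along the arcs $\theta=0$ and $\theta=\pi$ is precisely that verification. You also correctly identify that the computation yields $\tau g\tau$ rather than $\varepsilon(g)$ on $M\times[0,1]$ and that this is the same notational abuse the paper flags just after the preceding Lemma, where $g\cup\tau g\tau$ is by convention denoted $g\cup\varepsilon(g)$.
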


\begin{figure}[htbp]
\begin{center}
\begin{tikzpicture}
\begin{scope}
\clip (-2,-2)rectangle(2,0);
\draw[fill,gray!15!white] (-2,-2)rectangle(2,0);
\draw[fill,white] (0,0)circle[radius=2cm];
\end{scope}
\draw (-2,-2)rectangle(2,0);
\begin{scope}
\begin{scope}[rotate=180]
\draw[thick] (0,0)--(1,0); 
\draw[thick,<-] (1,0)--(2,0);
\end{scope}
\begin{scope}[rotate=0]
\draw[thick] (0,0)--(1,0); 
\draw[thick,<-] (1,0)--(2,0);
\end{scope}
\begin{scope}[rotate=-15]
\draw[thick] (0,0)--(1,0); 
\draw[thick,<-] (1,0)--(2,0);
\end{scope}
\begin{scope}[rotate=-30]
\draw[thick] (0,0)--(1,0); 
\draw[thick,<-] (1,0)--(2,0);
\end{scope}
\begin{scope}[rotate=-45]
\draw[thick] (0,0)--(1,0); 
\draw[thick,<-] (1,0)--(2,0);
\end{scope}
\begin{scope}[rotate=-60]
\draw[thick] (0,0)--(1,0); 
\draw[thick,<-] (1,0)--(2,0);
\end{scope}
\begin{scope}[rotate=-75]
\draw[thick] (0,0)--(1,0); 
\draw[thick,<-] (1,0)--(2,0);
\end{scope}
\end{scope}
\begin{scope}[rotate=-90]
\begin{scope}[rotate=0]
\draw[thick] (0,0)--(1,0); 
\draw[thick,<-] (1,0)--(2,0);
\end{scope}
\begin{scope}[rotate=-15]
\draw[thick] (0,0)--(1,0); 
\draw[thick,<-] (1,0)--(2,0);
\end{scope}
\begin{scope}[rotate=-30]
\draw[thick] (0,0)--(1,0); 
\draw[thick,<-] (1,0)--(2,0);
\end{scope}
\begin{scope}[rotate=-45]
\draw[thick] (0,0)--(1,0); 
\draw[thick,<-] (1,0)--(2,0);
\end{scope}
\begin{scope}[rotate=-60]
\draw[thick] (0,0)--(1,0); 
\draw[thick,<-] (1,0)--(2,0);
\end{scope}
\begin{scope}[rotate=-75]
\draw[thick] (0,0)--(1,0); 
\draw[thick,<-] (1,0)--(2,0);
\end{scope}
\end{scope}
\end{tikzpicture}
\caption{The positive suspension $\sigma_+(g)$ is given by taking each arrow, considered as a copy of $M\times I$, applying the function $g$, then putting it back. It is the identity on the shaded region. The diffeomorphism at the top is $g$ and the upside-down version of $g$, i.e., $g\cup\varepsilon(g)$.}
\label{Fig: positive suspension}
\end{center}
\end{figure}
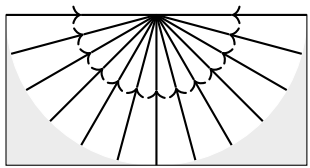

\subsection{The stable retraction invariant}  In order to define the second obstruction on $g\cup \varepsilon(g)$ we need to define it in a compatible way on $\cD^-(M)$ and $\cD^+(M)$. This is done by multiplying by a sign. We also need to compose with the retraction map $Wh_1^+(\pi_1M;\ZZ_2\oplus \pi_2M)\to Wh_1^+(\pi_1M;\ZZ)$ to make sure it is well defined. The result will be the ``{stable retraction invariant}''.

Let $\cD_0^k(M)$ denote the space of marked lens-shaped models for an $n$-manifold $M$ in indices $k,k+1$ where $n\ge 3$ and $k,n-k\ge 1$ so that $O(n+1)/SO(k)\times O(n-k)$ is simply connected making $\pi_0\cD_0^k(M)$ into a group. We define the \emph{stable second obstruction} to be homomorphism $\lambda_0^s=(-1)^k\lambda_0$ where
\[
	\lambda_0:\pi_0\cD_0^k(M)\to Wh_1^+(\pi_1M;\ZZ_2\oplus \pi_2M)
\]
is the standard second obstruction homomorphism. This invariant has the property of being compatible with positive suspension $\sigma_+:\cD_0^k(M)\to \cD_0^{k}(M\times I)$ since $\lambda_0$ commutes with $\sigma_+$, but $\lambda_0^s$ is anti-compatible with $\sigma_-:\cD_0^k(M)\to \cD_0^{k+1}(M\times I)$ since $\sigma_-$ changes the parity of $k$: $\lambda_0^s\circ \sigma_-=-\lambda_0^s$. This sign convention makes $\lambda_0^s$ compatible with stabilization:
\[
	\cP(M)=colim\, \cC(M\times I^m)
\]
since stabilization is taken by iterating the positive suspension $\sigma_+$. It is well known that $\sigma_-$ is equal to $-\sigma_+$ on the level of homotopy groups $\sigma_-=-\sigma_+:\pi_0\cC(M)\to \pi_0\cC(M\times I)$. 

For the case at hand, we have the following diagram.
\[
\xymatrix{
\pi_0\cD_0^+(M)=\pi_0\cD_0^2(M)\ar[r]^(.6){\sigma_+}\ar@/_1pc/[rr]_{\lambda_0^s} & \pi_0\cD_0^2(M\times I)\ar[r]^(.4){\lambda_0^s}& Wh_1^+(\pi_1M;\ZZ_2\oplus \pi_2M)\\
\pi_0\cD_0^-(M)=\pi_0\cD_0^1(M)\ar[r]^(.6){-\sigma_-}\ar[u]^{\varepsilon} \ar@/_1pc/[rr]_{\lambda_0^s} & \pi_0\cD_0^2(M\times I)\ar[r]^(.4){\lambda_0^s}& Wh_1^+(\pi_1M;\ZZ_2\oplus \pi_2M)
	}
\]

\begin{lem} Given that $\lambda_0(g)=\alpha[\sigma]$ for $g\in \cD_0^-(M)$, we obtain: $\lambda_0^s(g)=-\alpha[\sigma]$ and $\lambda^s_0(\varepsilon(g))=-\alpha[\sigma^{-1}]$.
\end{lem}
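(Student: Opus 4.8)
The plan is to deduce the lemma directly from two inputs already in hand: the sign convention $\lambda_0^s = (-1)^k\lambda_0$ with $k=1$ for $\cD_0^-(M)$, and a computation of how the involution $\varepsilon$ acts on the second obstruction. The first statement is immediate: since $g\in\cD_0^-(M)=\cD_0^1(M)$, we have $k=1$, so $\lambda_0^s(g) = (-1)^1\lambda_0(g) = -\alpha[\sigma]$. So the content is entirely in identifying $\lambda_0^s(\varepsilon(g))$.

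For the second statement, first I would note that $\varepsilon(g)\in\cD_0^+(M)=\cD_0^2(M)$, so $k=2$ and $\lambda_0^s(\varepsilon(g)) = (-1)^2\lambda_0(\varepsilon(g)) = \lambda_0(\varepsilon(g))$; hence it suffices to show $\lambda_0(\varepsilon(g)) = -\alpha[\sigma^{-1}]$. The key step is to understand what turning the lens-shaped model upside-down (via $\varepsilon(f_t)(x,s) = 1 - f_t(x,1-s)$) does to the incidence data that defines the second obstruction. Under $\varepsilon$, the index $i$ critical point becomes index $n+1-i$ on $M\times I$ with $n=3$, so the index-$1$ point $p_t$ and index-$2$ point $q_t$ swap roles: the old unstable $2$-sphere $S^2_p$ becomes the new stable $2$-sphere, and the old stable $1$-sphere $S^1_q$ becomes the new unstable $1$-sphere. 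The effect on the incidence matrix of the Morse complex is transposition together with reversal of each trajectory, which sends a group-ring entry $a$ to (a sign times) $\bar a$, the image of $a$ under the anti-involution $g\mapsto g^{-1}$ on $\ZZ[\pi_1 M]$. Concretely, the two extra intersection points contributing $\pm\sigma$ to the incidence matrix of $f_t$ now contribute $\pm\sigma^{-1}$ to that of $\varepsilon(f_t)$; and one tracks the class $\alpha\in\pi_2 M$ through this reversal — it is carried along by the equivariant structure (up to the action of the loop, but since in this construction $\alpha$ is the class of the standard sphere in the $S^1\times S^2$ summand and the relevant loop is $\sigma\in\pi_1 M'$, which acts trivially on $\alpha$, one gets exactly $\alpha$ back). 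So $\lambda_0(\varepsilon(g)) = \pm\alpha[\sigma^{-1}]$.

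The sign is where I would be most careful: I expect the transposition-plus-reversal of the incidence matrix to introduce a global sign $(-1)^{?}$ coming from the reindexing $i\mapsto n+1-i$ and from the convention relating the second obstruction of a lens in indices $1,2$ versus $2,3$ — this is precisely the discrepancy the stable convention $\lambda_0^s=(-1)^k\lambda_0$ was designed to absorb. The cleanest route is probably to invoke the suspension diagram on the previous page: since $\sigma_+$ commutes with $\lambda_0^s$ and $\sigma_+(g)$ has top equal to $g\cup\varepsilon(g)$, and since $\lambda_0^s$ is additive on $\pi_0\cD_0^2(M\times I)$, whatever $\lambda_0^s(\varepsilon(g))$ is, it is forced by consistency with $\lambda_0^s(g)=-\alpha[\sigma]$ and the known behavior of $\varepsilon$ at the level of $\ZZ[\pi_1 M]$. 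I would then simply record that the bookkeeping (transpose, reverse trajectories, reindex, apply the $(-1)^k$ correction) yields $\lambda_0^s(\varepsilon(g)) = -\alpha[\sigma^{-1}]$, matching the sign of $\lambda_0^s(g)$, as it must for the eventual sum $\lambda_0^s(g) + \lambda_0^s(\varepsilon(g)) = -(\alpha[\sigma]+\alpha[\sigma^{-1}])$ to be the nontrivial class promised in Theorem \ref{thm: retraction invariant}.

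The main obstacle is thus not conceptual but the careful sign/indexing accounting for how $\varepsilon$ acts on the second obstruction: establishing cleanly that the net effect on $\lambda_0$ is $\alpha[\sigma]\mapsto \alpha[\sigma^{-1}]$ together with a sign exactly cancelling the parity change in $(-1)^k$. If one wanted to avoid trajectory-level arguments entirely, one could instead cite the known formula for the action of the involution $\varepsilon$ on $Wh_1^+(\pi_1 M;\ZZ_2\oplus\pi_2 M)$ from \cite{HW} or \cite{Wh1a} — it is the standard $(a[\sigma]\mapsto \pm\bar a[\bar\sigma])$ conjugation-type involution — and combine it with $\lambda_0(\varepsilon g) = \varepsilon\circ\lambda_0(g)$ up to the established sign.
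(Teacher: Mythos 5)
Your overall structure is the same as the paper's: you peel off the elementary sign computations $\lambda_0^s(g)=(-1)^1\lambda_0(g)=-\alpha[\sigma]$ and $\lambda_0^s(\varepsilon g)=(-1)^2\lambda_0(\varepsilon g)$, so everything reduces to identifying $\lambda_0(\varepsilon(g))$. The paper then simply cites \cite{Wh1a} for $\lambda_0(\varepsilon(g))=-\alpha[\sigma^{-1}]$, noting that the computation of the involution on the second obstruction carried out there for 4-manifolds applies unchanged, and that the only new dimension-3 wrinkle (that $\varepsilon$ moves between $\cD_0^-$ and $\cD_0^+$) is handled by the $(-1)^k$. You correctly identify that this is exactly the load-bearing step, and you even mention citing \cite{HW} or \cite{Wh1a} as a fallback.

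The genuine gap is that you never actually establish the sign. You carry $\pm$ through the geometric discussion, and you carry $\pm$ even in your fallback citation ("$a[\sigma]\mapsto\pm\bar a[\bar\sigma]$"); then you settle the ambiguity by arguing that the sign "must" be $-$ in order for the sum to be the nontrivial class promised in Theorem \ref{thm: retraction invariant}. That is circular — the lemma is an input to that theorem, not a consequence of it. Worse, the sign is not cosmetic: with the opposite sign the sum would be $-\alpha[\sigma]+\alpha[\sigma^{-1}]$, which can vanish in $Wh_1^+(\pi_1M;\ZZ)$ when $\sigma$ is conjugate to $\sigma^{-1}$, a case the paper explicitly flags. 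So ``any sign works for nontriviality'' is false, and the sign has to be pinned down by an actual computation or a precise citation (which is what the paper does by invoking the $\lambda_0(\varepsilon(g))=-\alpha[\sigma^{-1}]$ formula from \cite{Wh1a}). A secondary misreading: you frame the $(-1)^k$ in $\lambda_0^s$ as "designed to absorb" the discrepancy created by $\varepsilon$ moving between index conventions. In fact the $(-1)^k$ is introduced purely to make $\lambda_0^s$ commute with $\sigma_+$ (and anti-commute with $\sigma_-$), i.e., to be compatible with stabilization; the minus sign in $\lambda_0(\varepsilon(g))=-\alpha[\sigma^{-1}]$ is a separate fact about how the involution acts on $\lambda_0$ itself, already present in the 4-manifold case where there is no index issue at all. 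Keeping these two signs conceptually separate, as the paper does, is what makes the bookkeeping transparent.
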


\begin{proof}
As in the case of 4-manifolds \cite{Wh1a}, the involution changes the second obstruction of $g$ to $\lambda_0(\varepsilon(g)=-\alpha[\sigma^{-1}]$. However, the involution also changes the index of the critical points as indicated in the diagram above. So, $\varepsilon(g)\in \cD_0^+(M)$. The stable invariant $\lambda_0^s$ keeps the same sign on $\pi_0\cD_0^+(M)$ givinb $\lambda^s_0(\varepsilon(g)=-\alpha[\sigma^{-1}]$. However, $\lambda_0^s$ changes the sign of $\lambda_0(g)$. So, $\lambda_0^s(g)=-\alpha[\sigma]$.
\end{proof}

When the first Postnikov invariant $k_1M=0$ we can use this lemma to compute the second obstruction for $g\cup\varepsilon(g)$:
\[
	\lambda_0^s(g\cup \varepsilon(g))=-\lambda_0(\sigma_-g)+\lambda_0\varepsilon(\sigma_+g)=-\alpha[\sigma]-\alpha[\sigma^{-1}]\neq0.
\]
In the general case we need to stabilize and use a well-defined ``stable retraction invariant'' defined as follows.

For $M=S^1\times S^2\#M'$ as above, let
\[
	\rho: \pi_0\cD_0^k(M\times I^m)\to Wh_1^+(\pi_1M;\ZZ)
\]
be the homomorphism given by $\rho(g)=r_\ast(\lambda_0^s(g))$ where $r_\ast:Wh_1^+(\pi_1M;\ZZ_2\oplus \pi_2M)\to Wh_1^+(\pi_1M;\ZZ)$ is the map induced by the retraction $r:M\to S^2$. By the exact sequence
\[
	\pi_0\cD_0^k(M\times I^m)\to \pi_0\cP(M)\to Wh_2(\pi_1M)\to 0
\]
and the fact that $r_\ast$ is zero on the image of $\chi:Wh_3(\pi)\to Wh_1^+(\pi_1M;\ZZ_2\oplus \pi_2M)$, $\rho$ gives a well defined homomorphism from the kernel of the first invariant $\pi_0\cP(M)\to Wh_2(\pi_1M)$ to $Wh_1^+(\pi_1M;\ZZ)$. We call this the \emph{stable retraction invariant}.

\begin{thm}
The stable retraction invariant is realized on the $3$-manifold $M=S^1\times S^2\#M'$ and, on our examples $g$ and $g\cup \varepsilon(g)$ takes the nonzero values
\[
	\rho(g)=-[\sigma],\quad \rho(g\cup \varepsilon(g))=-[\sigma]-[\sigma^{-1}].
\]
\end{thm}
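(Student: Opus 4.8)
The plan is to deduce the theorem from Theorem~\ref{theorem A} and the preceding Lemma, feeding them into the apparatus of Sections~3 and~4; almost all of the work is already done, and what remains is to check that $\rho$ applies to our examples and then to apply the coefficient map $r_\ast$.

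First I would record that $g$, $\varepsilon(g)$ and the concatenation $g\cup\varepsilon(g)$ all represent elements of the kernel of the stable first invariant $\pi_0\cP(M)\to Wh_2(\pi_1M)$, so that the stable retraction invariant $\rho$ is defined on them. Indeed $g$ and $\varepsilon(g)$ come from marked lens-shaped models, which by construction (see \cite{Wh1a}) have vanishing first obstruction and a well-defined second obstruction; and $g\cup\varepsilon(g)$, being a concatenation of two such, again has vanishing first obstruction. Since $\dim M=3$ lies below the stable range one works throughout with the iterated positive suspensions $\sigma_+^m$ into $M\times I^m$ for $m$ large; the sign convention $\lambda_0^s=(-1)^k\lambda_0$ was introduced precisely so that $\lambda_0^s$, hence $\rho$, is unchanged under $\sigma_+$, so nothing is lost by stabilizing.

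Then I would compute. By Theorem~\ref{theorem A}, $\lambda_0(g)=\alpha[\sigma]$, so by the Lemma $\lambda_0^s(g)=-\alpha[\sigma]$ and $\lambda_0^s(\varepsilon(g))=-\alpha[\sigma^{-1}]$. The retraction $r\colon M\to S^2$ restricts to a degree-one map on the $2$-sphere $S^2\subset S^1\times S^2\subset M$, so the coefficient homomorphism $\pi_2M\to\pi_2S^2=\ZZ$ induced by $r$ carries $\alpha$ to a generator. Therefore
\[
	\rho(g)=r_\ast(\lambda_0^s(g))=r_\ast(-\alpha[\sigma])=-[\sigma],\qquad \rho(\varepsilon(g))=r_\ast(-\alpha[\sigma^{-1}])=-[\sigma^{-1}].
\]
Since $\rho$ is a homomorphism on the kernel of the first invariant and $[g\cup\varepsilon(g)]=[g]+[\varepsilon(g)]$ there (the concatenation being the sum in $\pi_0\cP(M)$), it follows that $\rho(g\cup\varepsilon(g))=-[\sigma]-[\sigma^{-1}]$. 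Equivalently: $\lambda_0^s(g\cup\varepsilon(g))=-\alpha[\sigma]-\alpha[\sigma^{-1}]$ modulo $\im\chi$, and $r_\ast$ annihilates $\im\chi$ by Section~3, so the ambiguity is killed. Both answers are nonzero because $Wh_1^+(\pi_1M;\ZZ)$ is the free abelian group on the conjugacy classes of nontrivial elements of $\pi_1M$ and $\sigma\neq1$ — with $-[\sigma]-[\sigma^{-1}]=-2[\sigma]\neq0$ in the case that $\sigma$ is conjugate to $\sigma^{-1}$ — and since these values arise from the explicit pseudoisotopies of $M$ constructed above, the stable retraction invariant is realized on $M$ with the stated values.

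The point requiring the most care is the first paragraph together with the index bookkeeping in the computation: one must be sure the marked lens-shaped models $g$ and $\varepsilon(g)$ really land in $\ker(\pi_0\cP(M)\to Wh_2(\pi_1M))$, where $\rho$ is defined, and that the index-dependent sign $(-1)^k$ is exactly what makes $\lambda_0^s$ additive across the change of index $1\mapsto2$ produced by the involution, so that $g\cup\varepsilon(g)$ contributes $\lambda_0^s(g)+\lambda_0^s(\varepsilon(g))$ and not their difference. Granting these structural facts — all established in Sections~3 and~4 — the rest is an immediate consequence of Theorem~\ref{theorem A}, the Lemma, the identity $r_\ast\circ\chi=0$, and $r_\ast\alpha=1$.
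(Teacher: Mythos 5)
Your argument is correct and matches what the paper intends: the paper states this theorem without an explicit proof, treating it as an immediate consequence of Theorem~\ref{theorem A}, the Lemma $\lambda_0^s(g)=-\alpha[\sigma]$, $\lambda_0^s(\varepsilon(g))=-\alpha[\sigma^{-1}]$, the definition $\rho=r_\ast\circ\lambda_0^s$, the well-definedness via $r_\ast\circ\chi=0$, and freeness of $Wh_1^+(\pi_1M;\ZZ)$. Your reconstruction assembles exactly these ingredients in the expected order, with the additivity step across the index change handled by passing to $\pi_0\cP(M)$ rather than by the paper's direct formula $\lambda_0^s(g\cup\varepsilon(g))=-\lambda_0(\sigma_-g)+\lambda_0\varepsilon(\sigma_+g)$, which amounts to the same thing.
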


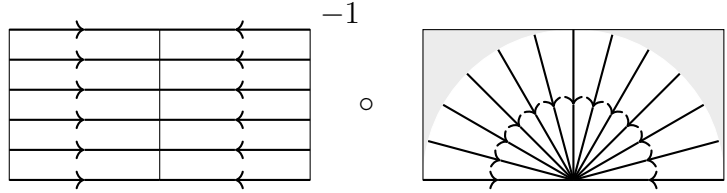
\begin{figure}[htbp]
\begin{center}
\begin{tikzpicture}
\begin{scope}[xshift= -55mm] 
\draw (-2,0)rectangle(2,2);
\draw (0,0)--(0,2);
\draw (2.75,1) node{$\circ$};
\draw (2.4,2.2)node{$-1$};
\begin{scope}[rotate=0]
\draw[thick] (0,0)--(1,0); 
\draw[thick,<-] (1,0)--(2,0);
\draw[thick] (0,0)--(-1,0); 
\draw[thick,<-] (-1,0)--(-2,0);
\end{scope}
\begin{scope}[yshift=4mm]
\draw[thick] (0,0)--(1,0); 
\draw[thick,<-] (1,0)--(2,0);
\draw[thick] (0,0)--(-1,0); 
\draw[thick,<-] (-1,0)--(-2,0);
\end{scope}
\begin{scope}[yshift=8mm]
\draw[thick] (0,0)--(1,0); 
\draw[thick,<-] (1,0)--(2,0);
\draw[thick] (0,0)--(-1,0); 
\draw[thick,<-] (-1,0)--(-2,0);
\end{scope}
\begin{scope}[yshift=12mm]
\draw[thick] (0,0)--(1,0); 
\draw[thick,<-] (1,0)--(2,0);
\draw[thick] (0,0)--(-1,0); 
\draw[thick,<-] (-1,0)--(-2,0);
\end{scope}
\begin{scope}[yshift=16mm]
\draw[thick] (0,0)--(1,0); 
\draw[thick,<-] (1,0)--(2,0);
\draw[thick] (0,0)--(-1,0); 
\draw[thick,<-] (-1,0)--(-2,0);
\end{scope}
\begin{scope}[yshift=2cm]
\draw[thick] (0,0)--(1,0); 
\draw[thick,<-] (1,0)--(2,0);
\draw[thick] (0,0)--(-1,0); 
\draw[thick,<-] (-1,0)--(-2,0);
\end{scope}
\end{scope} 
\begin{scope}[yscale=-1] 
\begin{scope} 
\clip (-2,-2)rectangle(2,0);
\draw[fill,gray!15!white] (-2,-2)rectangle(2,0);
\draw[fill,white] (0,0)circle[radius=2cm];
\end{scope}
\draw (-2,-2)rectangle(2,0);
\begin{scope} 
\begin{scope}[rotate=180]
\draw[thick] (0,0)--(1,0); 
\draw[thick,<-] (1,0)--(2,0);
\end{scope}
\begin{scope}[rotate=0]
\draw[thick] (0,0)--(1,0); 
\draw[thick,<-] (1,0)--(2,0);
\end{scope}
\begin{scope}[rotate=-15]
\draw[thick] (0,0)--(1,0); 
\draw[thick,<-] (1,0)--(2,0);
\end{scope}
\begin{scope}[rotate=-30]
\draw[thick] (0,0)--(1,0); 
\draw[thick,<-] (1,0)--(2,0);
\end{scope}
\begin{scope}[rotate=-45]
\draw[thick] (0,0)--(1,0); 
\draw[thick,<-] (1,0)--(2,0);
\end{scope}
\begin{scope}[rotate=-60]
\draw[thick] (0,0)--(1,0); 
\draw[thick,<-] (1,0)--(2,0);
\end{scope}
\begin{scope}[rotate=-75]
\draw[thick] (0,0)--(1,0); 
\draw[thick,<-] (1,0)--(2,0);
\end{scope}
\end{scope} 
\begin{scope}[rotate=-90] 
\begin{scope}[rotate=0]
\draw[thick] (0,0)--(1,0); 
\draw[thick,<-] (1,0)--(2,0);
\end{scope}
\begin{scope}[rotate=-15]
\draw[thick] (0,0)--(1,0); 
\draw[thick,<-] (1,0)--(2,0);
\end{scope}
\begin{scope}[rotate=-30]
\draw[thick] (0,0)--(1,0); 
\draw[thick,<-] (1,0)--(2,0);
\end{scope}
\begin{scope}[rotate=-45]
\draw[thick] (0,0)--(1,0); 
\draw[thick,<-] (1,0)--(2,0);
\end{scope}
\begin{scope}[rotate=-60]
\draw[thick] (0,0)--(1,0); 
\draw[thick,<-] (1,0)--(2,0);
\end{scope}
\begin{scope}[rotate=-75]
\draw[thick] (0,0)--(1,0); 
\draw[thick,<-] (1,0)--(2,0);
\end{scope}
\end{scope} 
\end{scope} 
\end{tikzpicture}
\caption{The well-known formula $\varepsilon(\sigma_+g)=\sigma_-\varepsilon(g)$ follows from the fact that the outcome of both operations give the same result, illustrated by the figure above. Composition with the inverse of the bottom diffeomorphism appears in the the formula for both $\varepsilon$ and $\sigma_-$.
}
\label{Fig: negative suspension}
\end{center}
\end{figure}
%

%
%

\section{Product with a circle}

Suppose that $g\in \cC(M)$ is the identity on $M\times 1$. Then we can identify top and bottom to obtain a diffeomorphism $\overline g$ of $M\times S^1$. We believe the following is a known argument. We call it ``closing the clam.''

\begin{lem}\label{lemma E} For $M$ a closed manifold
the mapping $\Diff(M\times I, rel\, \partial)\to \Diff(M\times S^1)$ given by identifying top of bottom of $M\times I$ is a monomorphism on components.
\end{lem}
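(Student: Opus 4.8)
The plan is to prove injectivity on $\pi_0$ (no surjectivity is asserted). Let $g\in\Diff(M\times I,\,rel\,\partial)$ and suppose its closure $\overline g\in\Diff(M\times S^1)$ is isotopic to the identity; I must deform $g$ to the identity through diffeomorphisms of $M\times I$ fixing the boundary. Write the gluing circle as $S^1=\RR/\ZZ$, let $p\colon M\times\RR\to M\times S^1$ be the infinite cyclic cover and $T(x,t)=(x,t+1)$ the generating deck translation. Stacking $\ZZ$-translates of $g$ gives a diffeomorphism $\widehat g$ of $M\times\RR$ commuting with $T$, equal to the identity near $M\times\ZZ$, and covering $\overline g$. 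The elementary observation that drives the argument is that \emph{any} diffeomorphism of $M\times\RR$ fixing $M\times\ZZ$ pointwise carries each slab $M\times[n,n+1]$ to itself: it permutes the components of $M\times(\RR\setminus\ZZ)$, and only the trivial permutation is compatible with fixing every wall $M\times\{n\}$. Hence it suffices to deform $\widehat g$ to the identity through $T$-equivariant diffeomorphisms of $M\times\RR$ fixing $M\times\ZZ$ pointwise, and then restrict to $M\times[0,1]$. (I suppress the routine collar bookkeeping that distinguishes ``$rel\,\partial$'' from ``fixing $\partial$ pointwise''.)

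First I would lift and unwind. An isotopy $\{\overline g_s\}$ from $\overline g$ to $\mathrm{id}$ lifts uniquely to a path $\{\widehat g_s\}$ of $T$-commuting diffeomorphisms of $M\times\RR$ with $\widehat g_0=\widehat g$; as $\widehat g_1$ covers $\mathrm{id}$ it is a deck translation $T^k$, where $k$ is the degree of the based loop $s\mapsto\overline g_s(z_0)$ under $\pi_1(M\times S^1)\to\pi_1(S^1)=\ZZ$. Precomposing $\{\overline g_s\}$ with a loop of $S^1$-rotations of degree $-k$ fixes the two endpoints and makes the new $k$ zero, so I may assume $\widehat g_1=\mathrm{id}$. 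Then $e_s:=\widehat g_s|_{M\times\{0\}}$ is a loop of embeddings $M\hookrightarrow M\times\RR$ based at the standard inclusion $\iota_0$, and by $T$-equivariance $\widehat g_s(x,n)=T^{n}e_s(x)$ for every $n$.

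The crucial step --- the one I expect to be the real obstacle --- is to lift this loop of embeddings to a loop of \emph{ambient} diffeomorphisms: a loop $\{\overline\Theta_s\}$ in $\Diff(M\times S^1)$ with $\overline\Theta_0=\overline\Theta_1=\mathrm{id}$ whose restriction to $M\times\{0\}$ is $p\circ e_s$. Equivalently, the restriction map $\Diff(M\times S^1)\to\Emb(M,M\times S^1)$ --- a fibration by parametrized isotopy extension, with fiber over $\iota_0$ homotopy equivalent to $\Diff(M\times I,\,rel\,\partial)$ --- is surjective on $\pi_1$ of the component of $\iota_0$. Granting this, lift $\{\overline\Theta_s\}$ to a path $\{\Theta_s\}$ of $T$-equivariant diffeomorphisms of $M\times\RR$ with $\Theta_0=\mathrm{id}$; then $\Theta_s|_{M\times\{0\}}=e_s$, so $\Theta_1$ fixes $M\times\{0\}$ pointwise and, being a deck translation, equals $\mathrm{id}$. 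Since $\Theta_s(x,n)=T^{n}e_s(x)=\widehat g_s(x,n)$, the path $\{\Theta_s^{-1}\circ\widehat g_s\}$ runs from $\widehat g$ to $\mathrm{id}$ through $T$-equivariant diffeomorphisms fixing $M\times\ZZ$ pointwise, hence preserving $M\times[0,1]$; its restriction to $M\times[0,1]$ is the required isotopy of $g$ to the identity.

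Finally I would justify the crucial step. Via the orbit fibration $\Diff(M)\to\Emb(M,M\times S^1)\to B$, where $B$ is the space of submanifolds isotopic to $M\times\{0\}$, it reduces to two claims: (i) every class in $\pi_1\Diff(M)$ lifts to $\pi_1\Diff(M\times S^1)$ compatibly with restriction, which is clear because $\Diff(M)$ acts on the first factor of $M\times S^1$ and restricts to itself on the slice; and (ii) $\pi_1 B$ is generated by the ``rotation'' loop that carries $M\times\{0\}$ once around the $S^1$-factor, which visibly lifts to the $S^1$-rotation loop in $\Diff(M\times S^1)$. A short diagram chase in the long exact sequence of the fibration then yields the surjectivity. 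Claim (ii) is a uniqueness-of-isotopy statement for the slice $M\times\{0\}\subset M\times S^1$; I would establish it by parametrized isotopy extension together with a canonical straightening --- within a contractible space of choices --- of an arbitrary slice toward a flat slice $M\times\{t\}$. Pinning down that straightening is the technical heart, and is presumably the ``well known but hard to reference'' fact the statement alludes to; for the specific $3$-manifolds used in this paper one can alternatively verify liftability of the particular loop $p\circ e_s$ directly.
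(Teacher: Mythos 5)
Your reduction is correct as far as it goes, but the ``crucial step'' you isolate is not a lemma you can defer to --- it is the entire content of the statement. The restriction map $\Diff(M\times S^1)\to\Emb(M,M\times S^1)$ is a fibration whose fiber over $\iota_0$ is (after the collar bookkeeping you mention) homotopy equivalent to $\Diff(M\times I,\,rel\,\partial)$, and the long exact sequence gives
\[
\pi_1\Diff(M\times S^1)\longrightarrow\pi_1\Emb(M,M\times S^1)\longrightarrow\pi_0\Diff(M\times I,\,rel\,\partial)\longrightarrow\pi_0\Diff(M\times S^1),
\]
so ``restriction is surjective on $\pi_1$'' is \emph{precisely} equivalent to the injectivity asserted in the lemma. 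Your further decomposition via $\Diff(M)\to\Emb(M,M\times S^1)\to B$ does not escape this: claim (i) is fine, but claim (ii), that $\pi_1 B$ is generated by the rotation loop, is exactly where all the difficulty lives, and you explicitly defer it (``I would establish it by \dots a canonical straightening''). In fact, running the long exact sequence of the fibration $\Diff(M\times S^1;M\times\{0\})\to\Diff(M\times S^1)\to B$, the surjectivity of $\pi_1\Diff(M\times S^1)\to\pi_1 B$ that (ii) is meant to supply is equivalent to injectivity of $\pi_0\Diff(M\times S^1;M\times\{0\})\to\pi_0\Diff(M\times S^1)$, a statement of the same type and difficulty as the one being proved. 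So the plan, as written, is circular at its crux, and the ``canonical straightening of a slice'' would need to be produced before it could count as a proof.

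The paper's argument sidesteps all of this. Instead of trying to lift a \emph{loop} of embeddings to a \emph{loop} of ambient diffeomorphisms (which is where the unproved $\pi_1$-surjectivity enters), it lifts the given isotopy $\overline g_t\simeq\mathrm{id}$ to a \emph{path} of embeddings $\widetilde g_t:M\times I\to M\times\RR$ satisfying the compatibility $\widetilde g_t(x,1)=\tau\widetilde g_t(x,0)$, confines everything to a compact tube $M\times[-N,N]$, and applies the ambient isotopy extension theorem to the path (no $\pi_1$ obstruction arises for a path). The compatibility condition means that the two complementary pieces $A_t=\im f_t$ and $B_t=\im f_t'$ match up after one unit of translation --- the ``clam'' --- and the chain of isotopies
\[
\mathrm{id}\cup\widetilde g\cup\mathrm{id}\;\simeq\;f_t\cup\widetilde g_t\cup f_t'\;\simeq\;f_1\cup\mathrm{id}\cup f_1'\;\simeq\;f_1\cup\tau^{-1}f_1'\tau\cup\mathrm{id}\;\simeq\;f_t\cup\tau^{-1}f_t'\tau\cup\mathrm{id}\;\simeq\;\mathrm{id}
\]
produces an explicit nullisotopy of $g$ (extended by the identity) rel $\partial(M\times[-N,N])$, which after rescaling is a nullisotopy of $g$ rel $\partial(M\times I)$. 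That argument requires only the compactly-supported isotopy extension theorem, and in particular nothing about $\pi_1$ of embedding or slice spaces, so it succeeds where your proposal stalls.
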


\begin{proof}

Since the map on $\pi_0$ is a homomorphism of groups, it suffices to show that the kernel is zero. Let $g$ be a diffeomorphism of $M\times I$ which is the identity on the boundary and let $\overline g$ be the corresponding diffeomorphism of $M\times S^1$ and suppose $\overline g$ is isotopic to the identity on $M\times S^1$. Then we will construct an isotopy of $g$ to the identity on $M\times I$.

We are given an isotopy of $\overline g$ to the identity on $M\times S^1$. We compose with the map $M\times I\onto M\times S^1$, then lift the composite map to $M\times \RR$. The result is an embedding $\widetilde g:M\times I\to M\times \RR$ which is isotopic to the inclusion map. Moreover, throughout the isotopy $\widetilde g_t$ has the property that, for all $t\in I$, 
\begin{equation}\label{eq: gt is a clam}
	\widetilde g_t(x,1)=\tau \widetilde g_t(x,0)
\end{equation}
where $\tau$ is the deck transformation of $M\times \RR$ over $M\times S^1$ given by $\tau(x,t)=(x,t+1)$.

Since $M\times I$ is compact, we may assume that the image of the isotopy $\widetilde g_t$ stays inside $M\times [-N,N]$ for some large $N$. By the ambient isotopy theorem, this family of embedding $\widetilde g_t$ extends to a family of diffeomorphisms $h_t$ of $M\times [-N,N]$ which is the identity on its boundary for all $t\in I$. By construction, $h_0$ is the identity on $M\times [-N,0]$ and on $M\times[1,N]$. At $t=1$, $h_1$ will be the identity on $M\times I$.

Let $f_t:M\times [-N,0]\to M\times [-N,N]$ and $f_t':M\times [1,N]\to M\times [-N,N]$ be the restrictions of $h_t$ to these two subsets of $M\times[-N,N]$ with images $A_t=im\,f_t$, $B_t=im\,f_t'$. By \eqref{eq: gt is a clam}, the bottom of $B_t$ is the translation $\tau$ of the top of $A_t$. This resembles a ``clam''. (See Figure \ref{Fig: closing the clam}.) When the clam closes, the top and bottom fit neatly together to form 
\[
	A_t\cup \tau^{-1}B_t=M\times [-N,N-1].
\]
The interiors of $A_t$ and $\tau^{-1}B_t$ do not meet since $\widetilde g_t(M\times 0)$ is a separating surface.

An isotopy of $h_0$ to the identity on $M\times [-N,N]$ can now be given by:
\[
	id\cup \widetilde g\cup id \simeq f_t\cup \widetilde g_t\cup f_t'\simeq f_1\cup id \cup f_1'\simeq f_1\cup \tau^{-1} f_1'\tau\cup id\simeq f_t\cup \tau^{-1} f_t'\tau\cup id\simeq id_{M\times[-N,N]}
\]
as shown in Figure \ref{Fig: closing the clam}.
\end{proof}

%
\begin{figure}[htbp]
\begin{center}
\begin{tikzpicture}
\begin{scope}[scale=1.2] 
\draw[ thick] (0,-.1) rectangle (2,3.1);
\draw[very thick,blue] (0,1)--(2,1);
\draw[very thick,blue] (0,2)--(2,2);
\draw[blue] (1,1.5) node{$\widetilde g$};
\draw (1,.5) node{$id$};
\draw (1,2.5) node{$id$};
\end{scope}
\draw (3.2,1.7) node{$\simeq$};
\draw (7.2,1.7) node{$\simeq$};
\draw (11.2,1.7) node{$\simeq$};
\begin{scope}[xshift=4cm,scale=1.2] 
\draw (1,-.5) node{the ``clam''};
\draw[ thick] (0,-.1) rectangle (2,3.1);
\begin{scope}[yshift=-2mm] 
\draw[very thick,blue] (0,1)..controls (2,.7) and (-1,1.4)..(1,1.4)..controls (2,1.4) and (0,1)..(2,1);
\end{scope}
\begin{scope}[yshift=8mm] 
\draw[very thick,blue] (0,1)..controls (2,.7) and (-1,1.4)..(1,1.4)..controls (2,1.4) and (0,1)..(2,1);
\end{scope}
\draw (1,.3) node{$A_t=im\, f_t$};
\draw (1,2.6) node{$B_t=im\, f'_t$};
\draw[blue] (1,1.5) node{$im\,\widetilde g_t$};
\end{scope}
\begin{scope}[xshift=8cm,scale=1.2] 
\draw[ thick] (0,-.1) rectangle (2,3.1);
\draw[very thick,blue] (0,1)--(2,1);
\draw[very thick,blue] (0,2)--(2,2);
\draw (1,.3) node{$ f_1$};
\draw (1,2.6) node{$f'_1$};
\draw[blue] (1,1.5) node{$id_{M\times I}$};
\end{scope}
\begin{scope}[xshift=12cm,scale=1.2] 
\draw (1,-.5) node{closed clam};

\draw[ thick] (0,-.1) rectangle (2,3.1);
\begin{scope}[yshift=-2mm] 
\draw[very thick,blue] (0,1)..controls (2,.7) and (-1,1.4)..(1,1.4)..controls (2,1.4) and (0,1)..(2,1);
\end{scope}
\draw[very thick,blue] (0,2.1)--(2,2.1);
\draw (1,.3) node{$f_t$};
\draw (1,1.6) node{$\tau^{-1}\circ f'_t\circ\tau$};
\draw (1,2.5) node{$id_{M\times [N-1,N]}$};
\end{scope}
\end{tikzpicture}
\caption{The top and bottom portions ($A_t$ and $B_t$) of the diffeomorphism $h_t=f_t\cup \widetilde g_t\cup f_t'$ fit together to give a diffeomorphism $f_t\cup \tau^{-1}f_t'\tau$ of $M\times[-N,N-1]$. The closed clam is isotopic to the identity on $M\times[-N,N]$ since the middle portion $\widetilde g_t$ is missing. Not shown is the isotopy 
\[
f_1\cup id_{M\times I}\cup f_1'\simeq f_1\cup \tau^{-1}\circ f'_1\circ\tau\cup id_{M\times[N-1,N]}
\]
given by sliding $f_1'$ down to $\tau^{-1}f'_1\tau$.}
\label{Fig: closing the clam}
\end{center}
\end{figure}
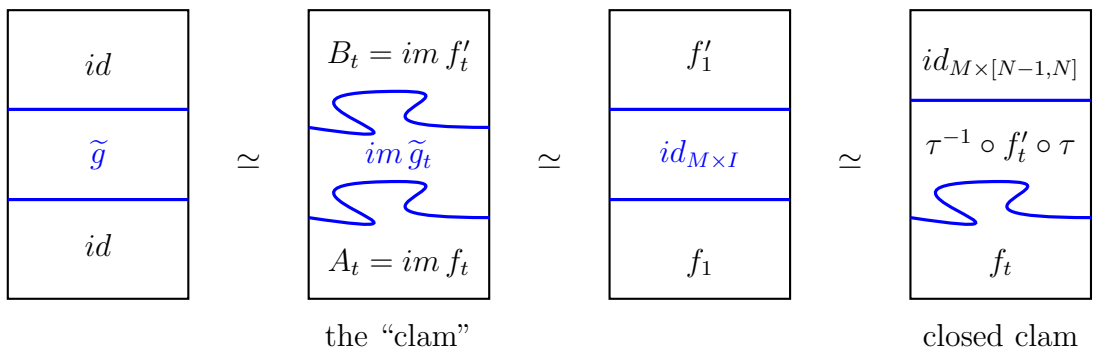

\begin{thm}
The pseudoisotopy $g\cup \varepsilon(g)$ gives a diffeomorphism $\overline g$ of $M\times S^1$ which is pseudoisotopic to the identity but no power of which is isotopic to the identity.
\end{thm}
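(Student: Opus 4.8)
The plan is to assemble three facts already in place: the ``closing the clam'' monomorphism (Lemma \ref{lemma E}), the positive suspension, and the stable retraction invariant $\rho$. First I would record the setup. Since $g\cup\varepsilon(g)$ is the identity on $M\times 0$ and on $M\times 1$, it represents an element of $\Diff(M\times I, rel\,\partial)\subset\cC(M)$, and closing the clam produces $\overline g:=\overline{g\cup\varepsilon(g)}\in\Diff(M\times S^1)$. By Lemma \ref{lemma E} this operation is an injective group homomorphism on path components, so $\overline g^{\,n}=\overline{(g\cup\varepsilon(g))^{\,n}}$ for every $n$, and a triviality detected in $\pi_0\Diff(M\times I, rel\,\partial)$ is reflected faithfully in $\pi_0\Diff(M\times S^1)$.

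For the pseudoisotopy assertion I would invoke the description of the positive suspension from Section~4.1: $\sigma_+(g)$ is a concordance of $M\times[-1,1]$, equal to the identity near the ends $M\times\{\pm1\}$, with the identity as its bottom and with $g\cup\varepsilon(g)$ as its top. Being rel the ends, $\sigma_+(g)$ descends through the quotient that closes the $[-1,1]$ factor into a circle, yielding a concordance of $M\times S^1$ from the identity to $\overline g$; hence $\overline g$ is pseudoisotopic to the identity. This step is a short diagram-chase once the explicit formula for $\sigma_+$ is written out.

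The heart of the argument is the last clause. Suppose, for contradiction, that $\overline g^{\,n}$ is isotopic to the identity for some $n\ge 1$ (negative powers reduce to this case). By the first paragraph and the injectivity in Lemma \ref{lemma E}, $(g\cup\varepsilon(g))^{\,n}$ is then trivial in $\pi_0\Diff(M\times I, rel\,\partial)$, hence trivial in $\pi_0\cC(M)$, and hence --- after iterating the positive suspension --- trivial in $\pi_0\cP(M)$. Since the stable retraction invariant is defined on $g\cup\varepsilon(g)$ (equivalently, its stable first obstruction vanishes), this element and all of its powers lie in $\ker\big(\pi_0\cP(M)\to Wh_2(\pi_1M)\big)$, the subgroup on which $\rho$ is a well-defined homomorphism (using the stable form of \eqref{eq: what happens} and the vanishing $r_\ast\circ\chi=0$ from Section~3 and Theorem \ref{thm: retraction invariant}). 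Therefore
\[
	\rho\big((g\cup\varepsilon(g))^{\,n}\big)=n\,\rho\big(g\cup\varepsilon(g)\big)=-n\big([\sigma]+[\sigma^{-1}]\big),
\]
which is nonzero in the free abelian group $Wh_1^+(\pi_1M;\ZZ)$ for every $n\ge 1$: its expansion in the conjugacy-class basis has nonzero coefficients, and even when $\sigma$ is conjugate to $\sigma^{-1}$ it equals $-2n[\sigma]\ne 0$. This contradicts triviality in $\pi_0\cP(M)$, so no power of $\overline g$ is isotopic to the identity.

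The main obstacle is the stabilization bookkeeping in the last paragraph. Because $\dim M=3$ is below the stable range, the obstruction-theoretic machinery --- the exact sequence \eqref{eq: what happens}, the additivity of $\lambda_0^s$, and hence of $\rho$ --- is available only after passing to $\pi_0\cP(M)$; one must therefore be careful that $(g\cup\varepsilon(g))^{\,n}$ remains in the kernel of the stable first obstruction for all $n$, that $\rho$ is genuinely additive there (this is exactly what the sign convention $\lambda_0^s=(-1)^k\lambda_0$, making $\varepsilon$ and $\sigma_\pm$ compatible, is designed to ensure), and that the detecting class $-n([\sigma]+[\sigma^{-1}])$ does not vanish. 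By comparison, checking that $\sigma_+(g)$ is the identity near the ends $M\times\{\pm1\}$, which is what permits the descent to $M\times S^1$, is routine.
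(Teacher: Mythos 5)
Your proposal is correct and assembles the proof in essentially the way the paper's own setup dictates (the paper states this theorem without a written proof, leaving it as an immediate consequence of Lemma \ref{lemma E}, the Proposition on $\sigma_+$ in Section~4.1, and the theorem computing $\rho(g\cup\varepsilon(g))$). Your chain of implications --- triviality of $\overline g^{\,n}$ in $\pi_0\Diff(M\times S^1)$ $\Rightarrow$ triviality of $(g\cup\varepsilon(g))^{\,n}$ in $\pi_0\Diff(M\times I,\mathrm{rel}\,\partial)$ $\Rightarrow$ triviality in $\pi_0\cP(M)$ $\Rightarrow$ vanishing of $\rho$, contradicting $-n([\sigma]+[\sigma^{-1}])\neq 0$ --- is exactly the intended argument, and the observation that powers of $g\cup\varepsilon(g)$ remain in the kernel of the stable first obstruction because that kernel is a subgroup closes the only real gap to be minded.
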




\begin{thebibliography}{aa}

\bibitem{BG} Budney, Ryan and Gabai, David, \emph{Knotted 3-balls in $S^4$}, arXiv:1912.09029v3.


\bibitem{Cerf} Cerf, Jean. \emph{La stratification naturelle des espaces de fonctions diff\'erentiables r\'eelles et le th\'eoreme de la pseudo-isotopie}. Publications Math\'ematiques de l'IH\'ES 39 (1970): 5--173.

\bibitem{HW} Hatcher, Allen, and John Wagoner. \emph{Pseudo-isotopies of compact manifolds}. Soci\'et\'e math\'ematique de France, 1973.

\bibitem{H2} Hatcher, Allen, \emph{The second obstruction for pseudo-isotopies}, Asterisque 6, Soci\'et\'e math\'ematique de France, 1973. 239--275.

\bibitem{I:thesis} Igusa, Kiyoshi. \emph{The $Wh_3(\pi)$ Obstruction for Pseudoisotopy}. PhD thesis. 1978.


\bibitem{I:AlgKthy} \bysame. \emph{On the algebraic K-theory of $A_\infty$-ring spaces}. Algebraic K-theory. Springer, Berlin, Heidelberg, 1982. 146--194.

\bibitem{What happens} \bysame. \emph{What happens to the formula of Hatcher and Wagoner when the first Postnikov invariant of $M$ is nontrivial?} Lecture notes in Math 1046, pp. 104--172. 1982

\bibitem{Stability} \bysame. \emph{The stability theorem for smooth pseudoisotopies.} K-theory 2, no. 1--2 (1988).

\bibitem{Wh1a} \bysame. \emph{Second obstruction to pseudoisotopy I.} arXiv:2110.09659.

\bibitem{S} Singh, Oliver, \emph{Pseudo-isotopies and diffeomorphisms of 4-manifolds}, arXiv:2111.15658/

\bibitem{W1} Watanabe, Tadayuki, \emph{Some exotic nontrivial elements of the rational homotopy groups of $\Diff(S^4)$}, arXiv:1812.02448.

\bibitem{W} \bysame,\emph{Theta-graph and diffeomorphisms of some 4-manifolds}, arXiv:2005.09545.

\end{thebibliography}
\end{document}